\numberwithin{equation}{section}
\newtheorem{theorem}{Theorem}[section]
\newtheorem{corollary}[theorem]{Corollary}
\newtheorem{lemma}[theorem]{Lemma}
\newtheorem{example}[theorem]{Example}
\newtheorem{proposition}[theorem]{Proposition}
\newtheorem{remark}[theorem]{Remark}
\DeclareMathOperator{\Ker}{\mathrm{ker}}
\newcommand{\R}{\mathbb{R}}
\newcommand{\Z}{\mathbb{Z}}
\newcommand{\F}{\mathcal{F}}
\newcommand{\PP}{\boldsymbol{P}}
\newcommand{\mm}{\boldsymbol{m}}
\newcommand{\kk}{\boldsymbol{k}}
\newcommand{\Prj}{\mathcal{P}}
\newcommand{\Oc}{\mathcal{O}}
\newcommand{\sign}{\mathop\mathrm{sign}\nolimits}
\newcommand{\s}{\mathfrak{s}}
\newcommand{\gb}{\boldsymbol{g}}
\renewcommand{\t}[1]{\widetilde{#1}}
\newcommand{\D}{\partial}
\newcommand{\dif}{\mathrm{\,d}}
\DeclareMathOperator{\Pr1}{\mathrm{Pr}_1}
\newcommand{\f}{\mathfrak{f}}
\newcommand{\g}{\mathfrak{g}}
\newcommand{\h}{\mathfrak{h}}
\newcommand{\E}{\mathcal{E}}
\begin{document}

\author{Alessandro Calamai}
\address[Alessandro Calamai]{Dipartimento di Ingegneria Industriale e Scienze Matematiche, 
Universit\`a Politecnica delle Marche, Via Brecce Bianche, 60131 Ancona, Italy}
\author{Marco Spadini}
\address[Marco Spadini]{Dipartimento di Matematica e Informatica `U.\ Dini', Universit\`a 
di Firenze, Via Santa Marta 3, 50139 Firenze, Italy}
\title[Periodic perturbations on implicitly defined manifolds]{Periodic 
perturbations of constrained motion problems on a class of implicitly defined manifolds}

\begin{abstract}
We study forced oscillations on differentiable manifolds which are globally defined as 
the zero set of appropriate smooth maps in some Euclidean spaces. Given a $T$-periodic 
perturbative forcing field, we consider the two different scenarios of a nontrivial 
unperturbed force field and of perturbation of the zero field. We provide simple, 
degree-theoretic conditions for the existence of branches of $T$-periodic solutions. We 
apply our construction to a class of second order Differential-Algebraic Equations.
\end{abstract}

\keywords{Ordinary differential equations on manifolds,
differential algebraic equations,
degree of a vector field,
periodic solution}
\subjclass[2000]{34C40; 34A09, 34C25}

\maketitle

\section{Introduction}

In this paper we study $T$-periodic solutions of some parametrized families of 
$T$-periodic constrained second order Ordinary Differential Equations (ODEs). In other 
words, we study forced oscillations on a differentiable submanifold of some Euclidean space.
As a physical interpretation, the equations we consider here represent the motion equations 
of a constrained system, the manifold being the constraint. We work under the 
assumption that such a manifold is globally defined as the zero set of a $C^\infty$-smooth 
map and, given a $T$-periodic perturbative forcing field, we consider two different 
scenarios according whether there exists a nontrivial unperturbed force field. Namely, let
$g\colon U\subseteq\R^m\times\R^s\to\R^s$ be a $C^\infty$ map  such that the Jacobian matrix 
of $g$ with respect to the last $s$ variables, $\partial_2g(x,y)$, is nonsingular for all 
$(x,y)\in U$.
This implies that $0\in\R^s$ is a regular value of $g$ so that $M=g^{-1}(0)$ is a 
$C^\infty$-smooth submanifold of $\R^k=\R^m\times\R^s$ of dimension $m$ (and codimension $s$).

For second order ODEs on differentiable manifolds we adopt the notation of e.g.\ 
\cite{Fu94}. We consider parametrized second order ODEs on $M$ that, with this notation,
assume the following forms:
\begin{subequations}\label{unoedue}
\begin{equation}
\ddot \xi_\pi=f(\xi,\dot\xi)+\lambda h(t,\xi,\dot\xi)  \label{uno}
\end{equation}
and
\begin{equation}\label{due}
\ddot \xi_\pi=\lambda h(t,\xi,\dot\xi),
\end{equation}
\end{subequations}
where $\lambda \geq 0$ is a parameter, $\xi(t)\in M$,
$\ddot \xi_\pi$ stands for the tangential part of the acceleration, $h\colon \R\times TM\to\R^k$ 
and $f\colon TM\to\R^k$ are continuous maps with the property that $f(\xi,\eta)$ and $h(t,\xi,\eta)$ 
belong to $T_\xi M$ for any $(t,\xi,\eta)\in\R\times TM$, and $h$ is $T$-periodic in the first 
variable.

Notice that the assumptions on $M$ and $g$ imply that, locally, the manifold $M$ can be 
represented as graph of some map {}from an open subset of $\R^{m}$ to $\R^s$. Thus, writing
$\xi(t)=\big(x(t),y(t)\big)$, Equations \eqref{unoedue} can be locally simplified via the 
implicit function theorem. In view of this fact one might think that it is possible to reduce 
Equations \eqref{unoedue} to ordinary differential equations in $\R^{m}$. It is not so. In fact, 
globally, $M$ may not be the graph of a map {}from an open subset of $\R^{m}$ to $\R^s$ as, for 
instance, when $U=\R^3$ and $g\colon \R\times\R^2\to\R^2$ is given by
\[
g(x,y)=g(x;y_1,y_2)=\big(e^{y_1}\cos(y_2)-x, e^{y_1}\sin(y_2)-x\big).
\]
In this case, although $\det\partial_2g(x,y)\neq 0$, one clearly has that the 
$1$-dimensional manifold $M=g^{-1}(0)$ is not the graph of a function 
$x\mapsto\big(y_1(x),y_2(x)\big)$. In fact, $M$ consists of infinitely many 
connected components each lying in a plane $y_2=\frac{\pi}{4}+l\pi$ for $l\in\Z$.

Observe also that even when $M$ is a (global) graph of some map $\Gamma$, the expression 
of $\Gamma$ might be too complicated to use or impossible to determine analytically so 
that, the simplified versions of Equations \eqref{unoedue} may be too difficult to use. A 
simple example of this fact is obtained by taking $m=s=1$, $U=\R\times\R$ and 
$g(x,y)=y^7+y-x^2+x^5$.

\smallskip
A pair $(\lambda,\xi)$, with $\lambda \geq 0$ and $\xi\colon  \R \to M$ a $T$-periodic solution
of \eqref{uno} (resp., of \eqref{due}) corresponding to $\lambda$, is called a
\emph{solution pair} of  \eqref{uno} (resp., of \eqref{due}).
The set of solution pairs is regarded as a subset of $[0,+\infty)\times C^1_T(M)$, where 
$C^1_T(M)$ is the metric subspace of the Banach space $C^1_T(\R^k)$ of the
$T$-periodic $C^1$ maps {}from $\R$ to $M$. In this paper we investigate the structure of 
the set of solution pairs of Equations \eqref{unoedue}. Namely, we will prove global 
continuation results for solution pairs of \eqref{unoedue}.

Our results here stem {}from a combination of those of \cite{FP93,FS99} about branches of 
$T$-periodic solutions of constrained second order equations, with the formula contained in 
\cite{CaSp} for the computation of the degree of a tangent vector field in terms of
the Brouwer degree of an appropriate map.
In the situation considered in this paper (namely, for manifolds which are of the form 
$M=g^{-1}(0)$ with $g$ as above), our results are complementary and, in some sense,
improve upon those of \cite{FP93,FS99} since the Brouwer degree of the considered maps, having 
a simpler nature than the degree of a tangent vector field, is in principle easier to compute. 
In fact, the degree of a tangent vector field can be seen as an extension of the notion of 
Brouwer degree (a different one being the notion of degree of maps between oriented manifolds). 
Actually, since vector fields in Euclidean spaces can be regarded as maps and vice versa, the 
degree of a vector field is essentially the Brouwer degree, with respect to $0$, of the field 
seen as a map.

\smallskip
A remarkable application of our results, which justifies our interest in the particular
differentiable manifolds considered in this paper, is the content of the last section 
where we study periodic perturbations of a particular class of second order Differential-Algebraic 
Equations (DAEs) in semi-explicit form. 

Recently, DAEs have received increasing interest due, in particular, to applications in 
engineering and have been the subject of extensive study (see e.g.\ \cite{KM} for a 
comprehensive treatment) aimed mostly (but not only) to numerical methods. Our approach here, 
as in our recent works \cite{Ca10, CaSp, Spa}, is directed towards qualitative theory of some 
particular DAEs which are studied by means of topological methods, making use of the 
equivalence of the given equations and suitable ODEs on manifolds. In particular, in this paper 
we investigate particular DAEs of second order, whereas the papers \cite{Ca10, CaSp, Spa}
were devoted to the first order case.
The type of equations that we study here may be used to represent some 
nontrivial physical systems, as, for instance, constrained systems
(see e.g.\ \cite{RR}).
As we will prove, such equations are equivalent to second
order ODEs on precisely the kind a manifold that we consider in the first part of the 
paper. This fact enables us to take advantage of the continuation results in \cite{FP93,FS99} 
and to find some surprisingly simple formulas useful for the study of connected sets of 
forced oscillations when a periodic forcing term is introduced. Some applications of our 
results are shown in a few illustrative examples.

\smallskip
We observe that our results, besides their intrinsic interest, can be useful in order to 
establish existence theorems in presence of a priori bounds (for a general reference on 
continuation methods, see e.g.\ \cite{N}). In addition, they can be used to get some 
`topological' multiplicity theorems for forced oscillations as in \cite{FPS00}. We will not 
pursue these lines here, though.

Finally, we wish to point out that in this paper we make use of deep results which are based 
on the fixed point index, but our techniques require just the notion of the well-known Brouwer 
degree. Our effort has been to make the paper accessible also to researchers which are not
particularly familiar with topological methods.

\section{Preliminaries}

\subsection{Tangent vector fields and the notion of degree}

We now recall some basic notions about tangent vector fields on manifolds as well as the 
notion of degree of an admissible tangent vector field.

Let $M\subseteq\R^k$ be a manifold. Let $w$ be a tangent vector field on $M$, that 
is, a continuous map $w\colon M\to\R^k$ with the property that $w(\xi)\in T_{\xi}M$ for any 
$\xi\in M$. 
If $w$ is (Fr\'echet) differentiable at $\xi\in M$ and $w(\xi)=0$, then the differential 
$dw_{\xi} \colon  T_{\xi}M\to\R^k$ maps $T_{\xi}M$ into itself (see e.g.\ \cite{Mi}), so that 
the determinant $\det dw_{\xi}$ of $dw_{\xi}$ is defined. If, in addition, $\xi$ is a 
nondegenerate zero (i.e.\ $dw_{\xi} \colon  T_{\xi}M\to\R^k$ is injective) then $\xi$ is an 
isolated zero and $\det dw_{\xi}\neq 0$. 

Let $W$ be an open subset of $M$ in which we assume $w$ \emph{admissible} (for the degree); that 
is, the set $w^{-1}(0) \cap W$ is compact. Then, one can associate to the pair $(w,W)$ an integer, 
$\deg(w,W)$, called the \emph{degree (or characteristic) of the vector field $w$ in $W$}, which, 
in a sense, counts (algebraically) the zeros of $w$ in $W$ (see e.g.\ \cite{FPS05, H, Mi} and 
references therein). In fact, when the zeros of $w$ are all nondegenerate, then the set 
$w^{-1}(0)\cap W$ is finite and
\begin{equation}\label{sommasegni}
\deg(w,W)=\sum_{\xi\in w^{-1}(0)\cap W}{\rm sign} \det dw_{\xi}.
\end{equation}
Observe that in the flat case, i.e.\ when $M = \R^k$, $\deg(w,W)$ is just the classical
Brouwer degree with respect to zero, $\deg_B(w,V,0)$, where $V$ is any
bounded open neighborhood of $w^{-1}(0) \cap W$ whose closure is
contained in $W$.

The notion of degree of an admissible tangent vector field plays a crucial role throughout
this paper. 
All the standard properties of the Brouwer degree for continuous
maps on open subsets of Euclidean spaces, such as homotopy invariance,
excision, additivity, existence, still hold in this more general
context (see e.g.\ \cite{FPS05}).

\begin{remark}\label{excision}
The Excision Property allows the introduction of the notion of index of an isolated zero 
of a tangent vector field. Indeed, let $\xi\in M$ be an isolated zero of $w$. Clearly, $\deg(w,V)$ 
is well defined for each open $V\subseteq M$ such that $V\cap w^{-1}(0)=\{ \xi\}$. By the 
Excision Property $\deg(w,V)$ is constant with respect to such $V$'s. This common value of 
$\deg(w,V)$ is, by definition, the \emph{index of $w$ at $ \xi$}, and is denoted by 
$\mathrm{i}\,(w, \xi)$. Using this notation, if $(w,W)$ is admissible, by the Additivity 
Property we get that, if all the zeros in $W$ of $w$ are isolated, then
\begin{equation}\label{sommaindici}
\deg(w,W)=\sum_{\xi\in w^{-1}(0)\cap W} \mathrm{i}\,(w,\xi).
\end{equation}
By formula \eqref{sommasegni} we have that, if $\xi$ is a nondegenerate zero of $w$, then
\begin{equation*}
 \mathrm{i}\,(w,\xi)=\sign\det d w_{\xi}.
\end{equation*}
Notice that \eqref{sommasegni} and \eqref{sommaindici} differ in the fact that, in the
latter, the zeros of $w$ are not necessarily nondegenerate as they have to be in the former. 
In fact, in \eqref{sommaindici}, $w$ need not be differentiable at its zeros.
\end{remark}

\subsection{Tangent vector fields on implicitly defined manifolds}\label{sectan}

Let $\Psi\colon \R\times M\to\R^k$ be a (time-dependent) tangent vector field on $M\subseteq\R^k$ 
that is, a continuous map with the property that $\Psi(t,\xi)\in T_{\xi}M$ for each 
$(t,\xi)\in\R\times M$. Assume that there is a connected open subset $U$ of $\R^k$ and a 
$C^\infty$ map $g\colon U\to\R^s$ with the property that $M=g^{-1}(0)$. Suppose that up to 
change of coordinates, writing $\R^k=\R^m\times\R^s$, $m=k-s$, one has that the partial 
derivative of $g$ with respect to the second variable, $\partial_2 g(x,y)$, is invertible 
for each $(x,y)\in U$. 

According to the above decomposition of $\R^k$, we can write, for any $\xi\in\R^k$,
$\xi=(x,y)$ and, with a slight abuse of notation, for any $t\in\R$ 
\[
 \Psi(t,\xi)=\Psi(t,x,y)=\big(\Psi_1(t,x,y),
                              \Psi_2(t,x,y)\big).
\]
Notice that one must have 
\begin{equation}\label{phi2}
\Psi_2(t,x,y)=-(\partial_2g(x,y))^{-1}\partial_1g(x,y)\Psi_1(t,x,y).
\end{equation}
In fact, $\Psi(t,\xi)\in T_{\xi}M$ being equivalent to $\Psi(t,\xi)\in\Ker g'(x,y)$,
one has for each $(t,x,y)\in\R\times M$ that
\[
0= g'(x,y)\Psi(t,x,y)=
   \partial_1g(x,y)\Psi_1(t,x,y)+\partial_2g(x,y)\Psi_2(t,x,y),
\]
which implies \eqref{phi2}; here $g'(x,y)$ denotes the Fr\'echet differential of $g$ at 
$(x,y)$.

We now give a formula for the degree of tangents vector fields on $M$ in terms of (potentially 
easier to compute) degree of appropriate vector fields on $U$. 

\begin{remark}\label{remext}
Assume that $\psi\colon  M \to \R^k$ is a tangent vector field on $M$. Since $M=g^{-1}(0)$ is a 
closed subset of the metric space $U$, the well-known Tietze's Theorem (see e.g.\ \cite{Dug}) 
implies that there exists an extension $\t\psi\colon U\to\R^k$ of $\psi$. 
\end{remark}

Remark{ }\ref{remext} shows that it is not restrictive to assume, as we sometimes do, that the 
given tangent vector fields are actually defined on a convenient neighborhood of the manifold 
$M$. In fact, although an arbitrary extension of $\psi$ may have many zeros outside $M$, we 
are interested in the degree of $\psi$ on $M$ which only takes into account those zeros of 
$\psi$ that lie on $M$.

The following result (see \cite{CaSp}, \cite{Spa}) gives a formula for the degree of
tangents vector fields on $M$ in terms of degree of appropriate vector fields on $U$.

\begin{theorem}\label{formuladeg}
Let $U\subseteq\R^m\times\R^s$ be open and connected, let $g\colon U\to\R^s$ be a $C^\infty$ 
function such that $\partial_2g(x,y)$ is nonsingular for any $(x,y)\in U$ and let $M=g^{-1}(0)$. 
Assume that $\psi\colon M\to\R^m\times\R^s$ is tangent to $M$ and let $\t\psi_1$ be the 
projection on $\R^k$ of an arbitrary continuous extension $\t\psi$ of $\psi$ to $U$. Define 
$\F\colon U\to\R^m\times\R^s$ by $\F(x,y)=\big(\t\psi_1(x,y),g(x,y)\big)$. Then, $\F$ is 
admissible in $U$ if and only if so is $\psi$ in $M$, and
\begin{equation}\label{idgradi}
 \deg(\psi,M)=\s\deg(\F,U),
\end{equation}
where $\s$ is the constant sign of $\det\partial_2g(x,y)$ for all $(x,y)\in U$.
\end{theorem}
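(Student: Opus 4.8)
The plan is to reduce \eqref{idgradi} to a purely local linear-algebra identity at nondegenerate zeros, after first arranging that all zeros are of this type. I would begin by comparing the zero sets. If $\F(x,y)=0$ then $g(x,y)=0$, so $(x,y)\in M$, where $\t\psi$ coincides with $\psi$ and hence $\t\psi_1(x,y)=\psi_1(x,y)=0$; the tangency relation \eqref{phi2} then forces $\psi_2(x,y)=0$, so $(x,y)\in\psi^{-1}(0)$. The reverse inclusion is immediate, whence $\F^{-1}(0)=\psi^{-1}(0)$ as subsets of $U$. Since $M=g^{-1}(0)$ is closed in $U$, compactness of this common set relative to $M$ and relative to $U$ are equivalent, which proves that $(\psi,M)$ is admissible if and only if $(\F,U)$ is.

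Next I would note that $\deg(\F,U)$ is independent of the chosen continuous extension $\t\psi$. Given two extensions, the affine homotopy between their first components, paired with the fixed map $g$, has its zero set contained in $M$, where both first components equal $\psi_1$; hence the zero set stays equal to $\psi^{-1}(0)$ throughout, the homotopy is admissible, and homotopy invariance yields the claimed independence. Exploiting this freedom, and recalling that $T_\xi M$ is a vector space so that affine combinations of tangent fields are again tangent, I would replace $\psi$---up to an admissible homotopy through tangent fields, supported in a relatively compact neighborhood of the compact set $\psi^{-1}(0)$---by a field that is $C^1$ near its zeros and has only nondegenerate ones, by a standard transversality argument; I then choose a $C^1$ extension, so that $\F$ becomes $C^1$ as well.

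The heart of the proof is the computation at a nondegenerate zero $\xi_0=(x_0,y_0)$. Writing the Jacobian of $\F=(\t\psi_1,g)$ in block form and eliminating the invertible block $\partial_2 g(\xi_0)$ by a Schur complement gives $\det\F'(\xi_0)=\det\partial_2 g(\xi_0)\cdot\det S$ with $S=\partial_1\t\psi_1-\partial_2\t\psi_1(\partial_2 g)^{-1}\partial_1 g$ evaluated at $\xi_0$. Parametrizing $T_{\xi_0}M=\Ker g'(\xi_0)$ by $u\mapsto\big(u,-(\partial_2 g)^{-1}\partial_1 g\,u\big)$ and using that $d\psi_{\xi_0}$ carries $T_{\xi_0}M$ into itself, a direct check on first components shows that $d\psi_{\xi_0}$ is represented precisely by $S$ in this parametrization, so $\det d\psi_{\xi_0}=\det S$. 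Taking signs gives $\sign\det d\psi_{\xi_0}=\s\,\sign\det\F'(\xi_0)$; in particular each zero of $\F$ is itself nondegenerate. Summing over the finitely many zeros and applying \eqref{sommasegni} to both $\deg(\psi,M)$ and the Brouwer degree $\deg(\F,U)$ then yields \eqref{idgradi}.

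I expect the second stage to be the main obstacle. The block-determinant identity is routine once the tangent space is correctly parametrized, but producing a nearby tangent field with only nondegenerate zeros while simultaneously controlling the homotopy so that no zero escapes to the boundary---and thereby preserving admissibility---requires the transversality machinery together with the passage from merely continuous to $C^1$ tangent fields, which is the delicate part of the argument.
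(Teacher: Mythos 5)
Your proof is correct. Note that this paper never proves Theorem \ref{formuladeg}: it is imported from \cite{CaSp} and \cite{Spa}, so there is no internal argument to compare against; your argument is, however, essentially the standard proof used in those cited sources. Each of your steps is sound: the identification $\F^{-1}(0)=\psi^{-1}(0)$ via the tangency relation \eqref{phi2}, the independence of $\deg(\F,U)$ from the chosen extension by the affine admissible homotopy paired with $g$, the reduction (via smoothing and a generic perturbation through tangent fields, legitimate and non-circular since the paper's degree for continuous tangent fields is defined by exactly such approximations and satisfies \eqref{sommasegni}, homotopy invariance and excision), and the Schur-complement identity $\det\F'(\xi_0)=\det\partial_2g(\xi_0)\cdot\det S$ together with the observation that $S$ is the matrix of $d\psi_{\xi_0}$ in the graph parametrization $u\mapsto\big(u,-(\partial_2g)^{-1}\partial_1g\,u\big)$ of $T_{\xi_0}M$, which gives $\sign\det d\psi_{\xi_0}=\s\,\sign\det\F'(\xi_0)$ and hence \eqref{idgradi} by summing over the zeros.
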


\section{Second order equations on manifolds}\label{secsec}

As in the previous section, let $M$ be a submanifold of $\R^k$. Given a continuous map
$\varphi \colon \R\times TM\to\R^k$ such that $\varphi (t,\xi,\eta) \in T_\xi M$ for any 
$(t,\xi,\eta)\in\R\times TM$, we will say that \emph{$\varphi$ is tangent to $M$}.

It is known (compare \cite{Fu94})
that the \emph{motion equation} associated with the \emph{force} $\varphi$ can be written 
in the form
\begin{equation}\label{secord}
\ddot \xi_\pi=\varphi (t,\xi,\dot\xi),
\end{equation}
where $\ddot \xi_\pi$ stands for the \emph{parallel component of the acceleration} 
$\ddot \xi \in \R^k$ at the point $\xi$. Namely, $\ddot \xi_\pi$ denotes the orthogonal 
projection of $\ddot \xi$ onto $T_{\xi}M$. A \emph{solution} of \eqref{secord} is a $C^2$ 
map $\xi\colon J \to M$, defined on a nontrivial interval $J$, such that 
$\ddot \xi_\pi (t) =\varphi (t,\xi(t),\dot\xi(t))$ for all $t \in J$.

Further, Equation \eqref{secord} can be written in an equivalent way as a first order 
equation on the tangent bundle $TM$ in the form 
\begin{equation}
\dot \zeta =\Psi(t,\zeta),  \label{fib2}
\end{equation}
where, for $\zeta =(\xi,\eta)$, 
\begin{equation}\label{fib3}
\Psi(t;\xi,\eta) =\big(\eta,r(\xi,\eta)+\varphi(t;\xi,\eta)\big), 
\end{equation}
and the map $r\colon TM\to\R^k$ is smooth, quadratic in the second variable 
$\eta\in T_{\xi}M$ for any $\xi\in M$, and with values in $(T_{\xi}M)^{\perp}$.
Such a map $r$ is strictly related to the second fundamental form on 
$M$ and may be interpreted as the \emph{reactive force} due to the constraint $M$.
Actually $r(\xi,\eta)$ is the unique vector in $\R^k$ which makes $\big(\eta,r(\xi,\eta)\big)$ 
tangent to $TM$ at $(\xi,\eta)$. It is well known that $\Psi$, called 
the \textit{second order vector field associated to} $\varphi$, is a (time-dependent)
tangent vector field on $TM$. Hence \eqref{fib2} is actually a first order equation 
on $TM$.

\smallskip
We now focus on the computation of the map $r$.

Let $\xi\colon J\subseteq\R\to M$ be a local solution of \eqref{secord} taking values in 
a neighborhood $V\subseteq M$ of $\xi_0:=\xi(t_0)$ for some $t_0\in J$. Restricting 
$V$ if necessary, we can assume that there exists $U\subseteq\R^k$ and a $C^\infty$ 
function $G\colon U\to\R^s$ such that $U\cap M=V=G^{-1}(0)$. Differentiating twice at 
$t_0$ the relation $G(\xi(t))=0$, we get
\begin{equation}\label{sodiff}
 G''(\xi_0)\big(\dot \xi_0,\dot\xi_0\big)+G'(\xi_0)\ddot \xi(t_0)=0,
\end{equation}
where, for the sake of simplicity, we write $\dot \xi_0$ instead of $\dot \xi(t_0)$.
According to Equations \eqref{fib2}-\eqref{fib3} 
\begin{equation}\label{phipiur}
 \ddot \xi(t_0)=\varphi\big(t_0,\xi_0,\dot \xi_0\big)
                        +r\big(\xi_0,\dot\xi_0\big). 
\end{equation}
Since $\ddot\xi_\pi(t_0)=\varphi\big(t_0,\xi_0,\dot \xi_0\big)\in T_{\xi_0}M$, 
one has $G'(\xi_0)\varphi\big(t_0,\xi_0,\dot\xi_0\big)=0$. Hence \eqref{sodiff} 
yields
\[
G'(\xi_0)r\big(\xi_0,\dot\xi_0\big)=
                      -G''(\xi_0)\big(\dot\xi_0,\dot\xi_0\big).
\]
And, finally,
\begin{equation}\label{formr}
r\big(\xi_0,\dot\xi_0\big)=
   -\left(G'(\xi_0)|_{(T_{\xi_0}M)^\perp}\right)^{-1}
                  G''(\xi_0)\big(\dot\xi_0,\dot\xi_0\big).
\end{equation}
Recall, in fact, that $G'(\xi_0)|_{(T_{\xi_0}M)^\perp}$ is invertible because 
$T_{\xi_0}M$ is the kernel of $G'(\xi_0)$.

Observe that equations \eqref{secord}--\eqref{phipiur} have another 
immediate interesting consequence. Namely, that if $t\mapsto\zeta(t)$ is an $M$-valued 
$C^2$ curve then 
\begin{equation}\label{prjxi}
 \Prj_{\zeta(t)}^\perp \ddot\zeta(t)=r\big(\zeta(t),\dot\zeta(t)\big),
\end{equation}
where $\Prj_{\zeta(t)}^\perp$ denotes the orthogonal projection of $\R^k$ onto the orthogonal
complement to $T_{\zeta(t)}M$.

\smallskip
When $M$ is of the form considered in Section \ref{sectan} we are able to get a more 
explicit expression for the inertial reaction $r$. The expression that we are going to 
obtain, although strictly speaking not necessary for the carrying 
out of our arguments, can be useful for understanding our geometric setting. The reader 
may skip the remaining part of this section at a first reading.

Assume, as in Section \ref{sectan}, that $M=g^{-1}(0)$ where $g\colon U\to\R^s$ is a 
$C^\infty$ map with $\partial_2 g(x,y)$ is invertible for each $(x,y)\in U$. Let 
$\xi\colon J\subseteq\R\to M$ be a local solution of \eqref{secord} and write 
$\xi=(x,y)$. Setting $\xi_0=(x_0,y_0)$ and $\dot\xi_0=(\dot x_0,\dot y_0)$, Equation 
\eqref{sodiff} becomes
\begin{equation}\label{sodiff2}
 g''(x_0,y_0)\big((\dot x_0,\dot y_0),(\dot x_0,\dot y_0)\big)+
\D_1g(x_0,y_0)\ddot x(t_0)+\D_2g(x_0,y_0)\ddot y(t_0)=0.
\end{equation}
Thus, $r(x_0,y_0;\dot x_0,\dot y_0)$ is the (unique) solution lying in 
$\big(T_{(x_0,y_0)}M\big)^\perp$ of the linear equation
\begin{equation}\label{sodiff2bis}
g'(x_0,y_0)r(x_0,y_0;\dot x_0,\dot y_0)=
                      -g''(x_0,y_0)\big((\dot x_0,\dot y_0),(\dot x_0,\dot y_0)\big).
\end{equation}
Now, to find the explicit expression of $r$ we need two elementary linear algebra lemmas.
The proof of the first is straightforward and is left to the reader.

\begin{lemma}
Let $A$ be an $s\times m$ matrix and $B$ be an $s\times s$ matrix.
Let $W=\{(u,v) \in \R^m\times \R^s : Au+Bv=0\}$.
Assume $B$ invertible. Then,
$W^\perp=\{(u,v) \in \R^m\times \R^s : u=A^T(B^{-1})^Tv\}$.
\end{lemma}

\begin{lemma} \label{lemma-invert}
Let $A$ be an $s\times m$ matrix and $B$ be an $s\times s$ matrix.
Assume $B$ invertible. Then, the $s\times s$ matrix
$C=AA^T(B^{-1})^T +B$ is invertible too.
\end{lemma}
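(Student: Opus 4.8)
The plan is to prove invertibility by proving injectivity: since $C$ is a square $s\times s$ matrix, it is invertible if and only if $\Ker C=\{0\}$. So I would start by assuming $Cv=0$ for some $v\in\R^s$ and work toward $v=0$.

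The key manoeuvre is a change of variable that eliminates the inverse transpose. Since $B$ is invertible, every $v\in\R^s$ can be written uniquely as $v=B^Tw$ for some $w\in\R^s$. Substituting this into $Cv=AA^T(B^{-1})^Tv+Bv$ and using $(B^{-1})^TB^T=I$, one gets
\[
Cv = AA^Tw + BB^Tw = \big(AA^T+BB^T\big)w .
\]
Thus the equation $Cv=0$ is equivalent to $\big(AA^T+BB^T\big)w=0$, and $v=0$ if and only if $w=0$.

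It then remains to observe that $AA^T+BB^T$ is nonsingular. Indeed, $AA^T$ is symmetric and positive semidefinite, while $BB^T$ is symmetric and \emph{positive definite} because $B$ is invertible; their sum is therefore symmetric positive definite, hence has trivial kernel. This forces $w=0$, so that $v=B^Tw=0$, showing $\Ker C=\{0\}$ and completing the argument.

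I do not expect a serious obstacle here: the only genuine step is spotting the substitution $v=B^Tw$, after which the positive-definiteness of $AA^T+BB^T$ closes the proof immediately. An essentially equivalent alternative is to factor $C=\big(AA^T(BB^T)^{-1}+I\big)B$ (using $(BB^T)^{-1}B=(B^{-1})^T$) and note that $AA^T(BB^T)^{-1}$ is similar, via $(BB^T)^{1/2}$, to the positive semidefinite matrix $(BB^T)^{1/2}AA^T(BB^T)^{1/2}$, so all its eigenvalues are nonnegative and $-1$ is not among them; but the change-of-variable route is shorter and avoids introducing matrix square roots.
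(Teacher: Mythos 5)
Your proof is correct and is essentially the paper's argument in mirror image: the paper left-multiplies $C$ by $B^{-1}$ to get the symmetric positive definite matrix $B^{-1}A\big(B^{-1}A\big)^T+I$, while your substitution $v=B^Tw$ amounts to right-multiplying $C$ by $B^T$ to get the symmetric positive definite matrix $AA^T+BB^T$ (the two matrices differ only by the congruence $X\mapsto BXB^T$). In both cases invertibility follows from the same key fact, namely that the sum of a positive semidefinite matrix and a positive definite one is positive definite, hence nonsingular.
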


\begin{proof}
To prove that $C$ is invertible we will show that so is $B^{-1}C$.
To see this, we prove that $B^{-1}C$ is symmetric and positive definite.
Notice that 
\[
B^{-1}C = B^{-1}AA^T(B^{-1})^T +I=B^{-1}A \big(B^{-1}A\big)^T+I,
\]
thus $B^{-1}C$ is of the form $XX^T+I$ with $X$ a $s\times m$ matrix, $I$ being the identity.
In particular, $B^{-1}C$ is symmetric.
Now, observe that $XX^T$ is always symmetric and positive semidefinite and thus 
$B^{-1}C$ is positive definite, being the sum of a positive definite matrix (the 
identity) and a positive semidefinite matrix.
\end{proof}

Set now $A=\D_1g(x_0,y_0)$, $B=\D_2g(x_0,y_0)$, and $C=AA^T(B^{-1})^T +B$
(recall that $B$ is an invertible $s\times s$ matrix by assumption).
Further, let
$r=(u,v)$ and $\sigma=-g''(x_0,y_0)\big((\dot x_0,\dot y_0),(\dot x_0,\dot y_0)\big)$, so 
that Equation \eqref{sodiff2bis} can be written as
\begin{equation}\label{linear}
Au+Bv=\sigma .
\end{equation}
Taking into account the above lemmas, simple calculations show that 
Equation \eqref{linear}
has a unique solution $(u,v)\in \R^m\times \R^s$ that lies in $\big(T_{(x_0,y_0)}M\big)^\perp$
and is given by
\[
  \left\{
  \begin{array}{l}
   u = A^T(B^{-1})^TC^{-1}\sigma,\\
   v = C^{-1}\sigma.
  \end{array}\right.
\]
Thus, $r(x_0,y_0;\dot x_0,\dot y_0)$ has the following expression:
\begin{equation} \label{explicit}
 r(x_0,y_0;\dot x_0,\dot y_0)=\begin{pmatrix}
           A^T(B^{-1})^TC^{-1}\sigma\\
           C^{-1}\sigma
          \end{pmatrix},
\end{equation}
where $\sigma=-g''(x_0,y_0)\big((\dot x_0,\dot y_0),(\dot x_0,\dot y_0)\big)$.

\begin{example}\label{parabola1}
 Let $k=2$, $m=1$ and $g(x,y)=\frac{1}{2}x^2-y-2$. We wish to compute the map $r$ in this 
 case. Clearly $M=g^{-1}(0)$ is a parabola. With the above notation, we have
 \[
  A=\partial_1g(x,y)=x,\quad B=\partial_2g(x,y)=-1,
                                    \quad C=AA^T(B^{-1})^T+B=1-x^2.
 \]
Also,
\[
 \sigma=-g''(x,y)\left(\left(\begin{smallmatrix}
  \dot x \\ \dot y
 \end{smallmatrix}\right),
 \left(\begin{smallmatrix}
  \dot x \\ \dot y
 \end{smallmatrix}\right)\right)=
 -\left(\begin{smallmatrix}
  \dot x & \dot y
 \end{smallmatrix}\right)\begin{pmatrix}
                          1 & 0\\
                          0 & 0
                         \end{pmatrix}
 \left(\begin{smallmatrix}
  \dot x \\ \dot y
 \end{smallmatrix}\right)
=-\dot x^2.
\]
Therefore,
\[
 r(x,y;\dot x, \dot y)=\begin{pmatrix}
           A^T(B^{-1})^TC^{-1}\sigma\\
C^{-1}\sigma
          \end{pmatrix}=
          \begin{pmatrix}
           \frac{x\dot x^2}{1-x^2} \\ \frac{\dot x^2}{x^2-1}
          \end{pmatrix} .
\]
\end{example}

\begin{example}
 Let $k=3$, $m=2$ and $g(x,y)=z-x^2-y^2$. We wish to compute the map $r$ in this 
 case. Clearly $M=g^{-1}(0)$ is a paraboloid. With the above notation, we have
 \begin{gather*}
  A=\partial_1g(x,y)=\left(\begin{smallmatrix}
  -2x & -2y\end{smallmatrix}\right), \quad B=\partial_2g(x,y)=1,\\ 
  C=AA^T(B^{-1})^T+B=4x^2+4y^2+1.
 \end{gather*}
Also,
\[
 \sigma=-g''(x,y,z)\left(\left(\begin{smallmatrix}
  \dot x \\ \dot y \\ \dot z
 \end{smallmatrix}\right),
 \left(\begin{smallmatrix}
  \dot x \\ \dot y \\ \dot z
 \end{smallmatrix}\right)\right)=-\left(
 \begin{smallmatrix}
  \dot x & \dot y & \dot z
 \end{smallmatrix}\right)\begin{pmatrix}
                          -2 & 0 & 0\\
                          0 & -2 & 0\\
                          0 & 0 & 0
                         \end{pmatrix}
 \left(\begin{smallmatrix}
  \dot x \\ \dot y \\ \dot z
 \end{smallmatrix}\right)
=2\dot x^2+2\dot y^2.
\]
Therefore,
\[
 r(x,y,z;\dot x, \dot y, \dot z)=\begin{pmatrix}
           A^T(B^{-1})^TC^{-1}\sigma\\
C^{-1}\sigma
          \end{pmatrix}=
          \begin{pmatrix}
           \dfrac{-4x(\dot x^2+\dot y^2)}{1+4x^2+4y^2} \\[3mm] 
           \dfrac{-4y(\dot x^2+\dot y^2)}{1+4x^2+4y^2} \\[3mm]
           2\dot x^2+2\dot y^2
          \end{pmatrix}
\]
\end{example}

The examples above, in conjunction with Equations \eqref{fib2} and \eqref{fib3}, show that 
even for simple manifolds second order equations can be rather complicated. The following
example confirms this fact, strengthening the case for the indirect methods investigated 
in this paper.

\begin{example}\label{mostro}
  Let $k=3$, $m=1$ and $g(x,y,z)=\big(z^3+z-x,z-y+x^2\big)$. We wish to compute the map 
$r$ in this case. Here $M=g^{-1}(0)$ is a smooth curve. With the above notation, we have
 \[
  A=\partial_1g(x,y)=\begin{pmatrix}
  -1 \\ 2x\end{pmatrix},
  \quad B=\partial_2g(x,y)=\begin{pmatrix}
                    0  & 3z^2+1\\
                    -1 & 1
                           \end{pmatrix}.
 \]
Also,
\[
 \sigma=-g''(x,y,z)\left(\left(\begin{smallmatrix}
  \dot x \\ \dot y \\ \dot z
 \end{smallmatrix}\right),
 \left(\begin{smallmatrix}
  \dot x \\ \dot y \\ \dot z
 \end{smallmatrix}\right)\right)=
 - \begin{pmatrix}
  \dot 6\dot z^2 \\ 2\dot x^2
 \end{pmatrix}.
\]
Lengthy but straightforward computations yield
\begin{multline*} 
r(x,y,y;\dot x, \dot y, \dot z)=
\frac{2(1+4x^2)}{5+20x^2+16x^4+18z^4+36z^4x^2+12z^2+24z^2x^2}\cdot\\[2mm]
\cdot\begin{pmatrix}
(18xz^4+3z^2+12xz^2+8x^3+4x+1)\dot x^2-(18xz^2+6x+12x^2+9)\dot z^2\\  
+9\dot z^2z^2+3\dot z^2-2\dot x^2-4\dot x^2x^2-9\dot x^2z^4-6\dot x^2z^2\\
18\dot z^2z^2+6\dot z^2+36\dot z^2z^2x^2+12\dot z^2x^2+\dot x^2+4\dot x^2x^2
\end{pmatrix} .
\end{multline*}
\end{example}

\subsection{Branches of harmonic solutions}

In what follows, if $N$ is a differentiable manifold embedded in some $\R^k$, we will 
denote by $C_T^1(N)$, the metric subspace of the Banach space 
$\big( C_T^1(\R^k)\,,\,|\cdot|_1\big)$ of all the $T$-periodic $C^1$ maps $\xi\colon\R\to N$ 
with the usual $C^1$ norm.

In this section we recall two known results {}from \cite{FP93} and \cite{FS99} about the 
sets of \emph{solution pairs} of \eqref{uno} and of \eqref{due}, i.e.\ of those pairs 
$(\lambda ,\xi)\in [0,\infty )\times C_T^1(N)$ with $\xi$ a $T$-periodic solution of 
\eqref{uno} and of \eqref{due}, respectively. Recall that a solution pair $(\lambda ,\xi)$ 
of \eqref{uno} is said to be \emph{trivial} if $\lambda=0$ and $\xi$ is constant, whereas
a solution pair $(\lambda ,\xi)$ of \eqref{due} is \emph{trivial} merely if $\lambda=0$.

For the sake of simplicity  we make some conventions. We will regard every space 
as its image in the following diagram of natural inclusions 
\begin{equation}
\begin{CD}
N & @>>> & C_T^1(N) \\ 
@VVV &  & @VVV \\ 
\left[ 0,\infty \right) \times N & @>>> & \left[ 0,\infty \right)
\times C_T^1(N)
\end{CD}
\label{diag}
\end{equation}
In particular, we will identify $N$ with its image in $C_T^1(N)$ under the embedding which 
associates to any $\xi_0\in N$ the map $\hat \xi_0\in C_T^1(N)$ constantly equal to $\xi_0$. 
Moreover we will regard $N$ as the slice $\{0\}\times N\subset [0,\infty)\times N$ and,
analogously, $C_T^1(N)$ as $\{0\}\times C_T^1(N)$. We point out that the images of the above 
inclusions are closed. According to these identifications, if $\Omega$ is an open subset of 
$[0,\infty)\times C_T^1(N)$, by $ N\cap\Omega$ we mean the open subset of $N$ given by all 
$\xi_0\in N$ such that the pair $(0,\hat\xi_0)$ belongs to $\Omega$. If $\mathcal{O}$ is an 
open subset of $[0,\infty)\times N$, then $\mathcal{O}\cap N$ represents the open set 
$\big\{\xi_0\in N:(0,\xi_0)\in\mathcal{O}\big\}$.
\medskip 

We need to introduce some further notation. 
Given $f\colon TN\to\R^k$ tangent to the manifold
$N$, we define the tangent vector field $f|_N\colon N\to\R^k$ given by $f|_N(\xi)=f(\xi,0)$ for
any $\xi\in N$. The following results of \cite{FP93} and of \cite{FS99} play a central role:

\begin{theorem}[\cite{FS99} Th.\ 4.2]\label{ramo2}
Let $N\subseteq\R^k$ be a boundaryless manifold, and let $f\colon TN\to\R^k$ and 
$h\colon\R\times TN\to\R^k$ be tangent to $N$. Assume that $h$ is $T$-periodic in the first 
variable. Let $\Omega$ be an open subset of $[0,\infty)\times C_T^1(N)$.
Assume that 
$\deg\big(f|_N, N\cap\Omega\big) $ is well defined and nonzero.
Then, $\Omega$ contains a 
connected set $\Gamma$ of nontrivial solution pairs for \eqref{uno} whose closure 
meets $\big(f|_N\big)^{-1}(0)\cap\Omega$ and is not contained in any compact subset of 
$\Omega$. In particular, if $N$ is closed in $\R^k$ and $\Omega=[0,\infty)\times C_T^1(N)$, 
then $\Gamma$ is unbounded.
\end{theorem}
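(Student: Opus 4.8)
The plan is to convert the second-order problem into a first-order one on the tangent bundle and then run a degree-theoretic global continuation argument. Following the reduction recalled in \eqref{fib2}--\eqref{fib3}, with force $\varphi(t,\xi,\eta)=f(\xi,\eta)+\lambda h(t,\xi,\eta)$, Equation \eqref{uno} is equivalent to the first-order equation $\dot\zeta=\Psi_\lambda(t,\zeta)$ on $TN$, where $\zeta=(\xi,\eta)$ and
\[
\Psi_\lambda(t;\xi,\eta)=\big(\eta,\,r(\xi,\eta)+f(\xi,\eta)+\lambda h(t,\xi,\eta)\big)
\]
is a time-dependent tangent vector field on $TN$. The correspondence $\xi\leftrightarrow(\xi,\dot\xi)$ identifies $T$-periodic solutions of \eqref{uno} with $T$-periodic solutions of this system. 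First I would record what the trivial solution pairs become under this correspondence: a constant $\xi\equiv\xi_0$ with $\lambda=0$ yields the constant $\zeta\equiv(\xi_0,0)$, which solves $\dot\zeta=\Psi_0(\zeta)$ exactly when $\Psi_0(\xi_0,0)=0$; since $r$ is quadratic in the velocity, $r(\xi_0,0)=0$, so this is equivalent to $f|_N(\xi_0)=0$. Thus the trivial pairs in $\Omega$ correspond precisely to the zeros of the tangent vector field $f|_N$ in $N\cap\Omega$.

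Next I would set up the functional-analytic framework. The $T$-periodic solutions of $\dot\zeta=\Psi_\lambda(t,\zeta)$ are the fixed points of a completely continuous operator on an open subset of $[0,\infty)\times C^1_T(TN)$ (a Poincar\'e-type or equivalent integral operator, obtained after extending the field to a neighborhood of $TN$ in the spirit of Remark~\ref{remext} so that solutions exist and the operator is compact by Ascoli--Arzel\`a). The hypothesis that $\deg(f|_N,N\cap\Omega)$ is well defined guarantees that the zero set of $f|_N$ in $N\cap\Omega$ is compact, which in turn makes the fixed-point set at $\lambda=0$ compact and the operator admissible on the slice.

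The heart of the argument is a degree--index computation showing that the fixed point index of this operator at the trivial branch equals, up to sign, $\deg(f|_N,N\cap\Omega)$, hence is nonzero. I would proceed in two steps. First, for the autonomous field $\Psi_0$ one uses the classical identity relating the fixed point index of the $T$-translation operator at an isolated equilibrium $(\xi_0,0)$ to the index of the vector field $\Psi_0$ there, and hence the degree of the operator on the slice to $\pm\deg(\Psi_0,\mathcal{W})$ for a suitable open $\mathcal{W}\subseteq TN$ containing the equilibria. Second, since the zeros of $\Psi_0(\xi,\eta)=(\eta,r(\xi,\eta)+f(\xi,\eta))$ force $\eta=0$ and $f|_N(\xi)=0$, a homotopy collapsing the velocity variable identifies $\deg(\Psi_0,\mathcal{W})$ with $\pm\deg(f|_N,N\cap\Omega)$. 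Combining the two yields a nonzero index.

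Finally, with a nonzero index at the trivial branch I would invoke a global continuation principle of Furi--Pera type: the connected component $\Gamma$ of nontrivial solution pairs whose closure meets $(f|_N)^{-1}(0)\cap\Omega$ cannot be compactly contained in $\Omega$, for otherwise one could excise it and contradict the nonvanishing of the index via homotopy and additivity. When $N$ is closed and $\Omega=[0,\infty)\times C^1_T(N)$, properness of the solution operator on closed bounded sets (again Ascoli--Arzel\`a) forces any bounded closed branch to be compact, so non-compactness upgrades to unboundedness. The main obstacle I anticipate is the degree--index identity of the third step: fixing the sign conventions and, more substantially, making the operator well defined and compact despite the possible non-global existence of solutions on $N$, which requires the extension/truncation device together with the admissibility provided by the hypothesis.
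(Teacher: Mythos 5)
First, a point of comparison: the paper itself offers no proof of Theorem \ref{ramo2}; it is recalled verbatim from \cite{FS99} (Theorem 4.2) and used as a black box, just as Theorem \ref{ramo1} is recalled from \cite{FP93}. Measured against the proof in that cited literature, your proposal reconstructs essentially the right architecture: reduction of \eqref{uno} to the first-order system $\dot\zeta=\Psi_\lambda(t,\zeta)$ on $TN$ via \eqref{fib2}--\eqref{fib3}, identification of the trivial solution pairs with the zeros of $f|_N$ (using that $r$ is quadratic in the velocity, so $r(\xi,0)=0$), transfer of the degree hypothesis from $N$ to $TN$, and a global continuation argument based on the fixed point index of a Poincar\'e-type translation operator --- exactly the machinery of \cite{FP93,FS98,FPS05}.

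One sub-step is described too loosely, and it is the one you flag yourself. The identity $\deg(\Psi_0,\mathcal{W})=\pm\deg\big(f|_N,N\cap\Omega\big)$ cannot be obtained by ``a homotopy collapsing the velocity variable'': a field of the form $(\xi,\eta)\mapsto\big(\eta,\, s\,r(\xi,\eta)+f(\xi,s\eta)\big)$ is tangent to $TN$ only for $s=1$, because tangency of a vector $(\eta,w)$ to $TN$ at $(\xi,\eta)$ forces the normal component of $w$ to equal $r(\xi,\eta)$ exactly; the inertial term cannot be homotoped away. What a homotopy \emph{can} do is remove the velocity dependence of the active force, i.e.\ pass from $\big(\eta,r(\xi,\eta)+f(\xi,\eta)\big)$ to $\big(\eta,r(\xi,\eta)+f(\xi,0)\big)$, which is admissible since the zero set is $\{\eta=0,\ f|_N(\xi)=0\}$ throughout. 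After that one still needs the genuine degree formula for second-order fields,
\[
\deg\Big(\big(\eta,\,r(\xi,\eta)+f|_N(\xi)\big),\,\mathcal{W}\Big)=(-1)^{\dim N}\deg\big(f|_N,N\cap\Omega\big),
\]
which \cite{FS99} quotes from earlier work of Furi and Pera and which is proved by linearization: at a nondegenerate zero $(\xi_0,0)$ the differential of the second-order field is $(u,v)\mapsto(v,Au+Bv)$ with $A=d(f|_N)_{\xi_0}$ (the contributions of $r$ vanish precisely because $r$ is quadratic in $\eta$), and the determinant of such a block map is $(-1)^{\dim N}\det A$; the general case follows by excision and approximation. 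Since only the nonvanishing of the degree matters, the sign is harmless; with this sub-lemma supplied, your argument closes and coincides with the cited one.
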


Given $h\colon\R\times TN\to\R^k$ tangent to the manifold $N$, we define the
\emph{mean value} tangent vector 
field $w_h\colon N\to\R^k$ given by 
\begin{equation}\label{mean}
 w_h (\xi)=\frac{1}{T} \int_0^T h(t,\xi,0)dt.
\end{equation}

\begin{theorem}[\cite{FP93} Th.\ 2.2]\label{ramo1}
Let $N\subseteq\R^k$ be a boundaryless manifold, and let $h\colon\R\times TN\to\R^k$ be 
tangent to $N$ and $T$-periodic in the first variable. Let $\Omega$ be an open subset of 
$[0,\infty)\times C_T^1(N)$.
Assume that $\deg\big(w_h,N\cap\Omega\big) $ is well defined 
and nonzero.
Then, $\Omega$ contains a connected set $\Gamma$ of nontrivial solution pairs for 
\eqref{due} whose closure meets $ w_h^{-1}(0)\cap\Omega$ and is not contained 
in any compact subset of $\Omega$. In particular, if $N$ is closed in $\R^k$ and 
$\Omega=[0,\infty)\times C_T^1(N)$, then $\Gamma$ is unbounded.
\end{theorem}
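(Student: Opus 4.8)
The plan is to recast the search for $T$-periodic solutions of \eqref{due} as a zero-finding problem for a parametrized tangent vector field on the infinite-dimensional manifold $C_T^1(N)$, and then to invoke a global continuation principle whose hypothesis is a nonvanishing degree computed on the slice $\{0\}\times N$. First I would pass to the equivalent first order system \eqref{fib2}--\eqref{fib3} on $TN$, which for \eqref{due} reads $\dot\xi=\eta$, $\dot\eta=r(\xi,\eta)+\lambda h(t,\xi,\eta)$, and reformulate the $T$-periodicity requirement as a fixed point equation for an operator obtained by integrating the field and imposing periodic boundary conditions. Standard Ascoli--Arzel\`a arguments show that this operator is completely continuous on bounded sets and that the solution set is locally compact, so that the degree of the associated tangent vector field on $C_T^1(N)$ is well defined on any region in which the solutions form a compact set.

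Next I would analyze the unperturbed problem $\lambda=0$. Since $r(\xi,\eta)$ is quadratic in $\eta$, one has $r(\xi,0)=0$, so every constant curve is a $T$-periodic solution; these constitute the copy of $N$ sitting inside $C_T^1(N)$ via the diagram \eqref{diag}, and by definition they are exactly the trivial solution pairs. The core of the argument is to show that the local degree at $\lambda=0$ of the continuation field, restricted to $N\cap\Omega$, coincides with $\deg(w_h,N\cap\Omega)$, where $w_h$ is the mean value field \eqref{mean}. To this end I would carry out a Lyapunov--Schmidt reduction (equivalently, an averaging argument) near the slice of constants: integrating $\dot\eta=r(\xi,\eta)+\lambda h$ over $[0,T]$ and imposing $\dot\xi(T)=\dot\xi(0)$ shows that the leading-order obstruction to a $T$-periodic solution bifurcating from a constant $\xi_0$ is $\tfrac1T\int_0^T h(t,\xi_0,0)\dif t=w_h(\xi_0)$. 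Making this precise through a homotopy that keeps the zero set on $N$ admissible would identify the two degrees.

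With the degree reduction in hand I would apply the abstract global continuation theorem for tangent vector fields (the fixed point index machinery underlying \cite{FP93,FS99}): a connected component of the closure of the set of nontrivial solution pairs that meets the zero set $w_h^{-1}(0)\cap\Omega$ cannot be simultaneously compact and contained in $\Omega$, for otherwise excision and additivity of the degree would force $\deg(w_h,N\cap\Omega)=0$, contradicting the hypothesis. The existence of a genuinely \emph{connected} such branch $\Gamma$ is then extracted by the usual Whyburn-type separation lemma applied to the locally compact solution set. Finally, when $N$ is closed in $\R^k$ and $\Omega=[0,\infty)\times C_T^1(N)$, a set not contained in any compact subset of $\Omega$ must be unbounded, since closed bounded subsets with equibounded derivatives are relatively compact.

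The step I expect to be the main obstacle is the degree reduction at $\lambda=0$: one must control $T$-periodic solutions uniformly as $\lambda\to0^+$ while handling the genuinely nonlinear unperturbed dynamics (the geodesic flow generated by the reaction term $r$, rather than a trivial linear field), and verify that the leading-order bifurcation obstruction is precisely $w_h$ with the correct orientation, so that the finite-dimensional degree on $N$ is recovered exactly.
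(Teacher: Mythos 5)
A preliminary remark: the paper contains no proof of Theorem \ref{ramo1} at all --- it is quoted as Th.\ 2.2 of \cite{FP93} and used as a black box (the paper's own contribution, Theorem \ref{tdue}, merely combines it with Theorem \ref{formuladeg}). So your sketch can only be measured against the proof in \cite{FP93}, whose architecture --- first-order reformulation on $TN$, Poincar\'e translation operator and fixed point index, identification of that index for small $\lambda>0$ with $\deg(w_h,\cdot)$, a Whyburn-type connectivity argument, and Ascoli--Arzel\`a compactness for the final unboundedness claim --- your plan does reproduce in outline.

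There are, however, two genuine gaps. First, your assertion that the constant curves ``are exactly the trivial solution pairs'' is false for \eqref{due}: by the paper's convention a solution pair of \eqref{due} is trivial \emph{merely if} $\lambda=0$, and at $\lambda=0$ the equation is $\ddot\xi_\pi=0$, the inertial (geodesic) equation, whose $T$-periodic solutions comprise not only constants but all closed geodesics run at suitable speeds (great circles on a sphere, for instance). This is not a pedantic slip; it is the central difficulty of the theorem. Your Lyapunov--Schmidt/averaging reduction ``near the slice of constants,'' and equally your excision/additivity argument, presuppose that inside the relevant open sets the $\lambda=0$ solution set consists of constants only. To justify this you need a lemma asserting that no nonconstant $T$-periodic geodesic can lie in a small $C^1$-neighborhood of a constant (true, via convex normal neighborhoods, but it must be proved), and you must show that the branch attaches to $w_h^{-1}(0)$ inside the constants rather than to some nonconstant closed geodesic sitting in the $\lambda=0$ slice; otherwise the compactness and excision steps in your global argument simply do not go through. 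Second, the step you yourself flag as ``the main obstacle'' --- that the fixed point index of the $T$-translation operator, for small $\lambda>0$, on an isolating neighborhood of the relevant constants equals $\pm\deg\big(w_h,N\cap\Omega\big)$ --- is precisely the hard content of \cite{FP93}. Integrating \eqref{due} over a period only shows that $w_h(\xi_0)=0$ is a first-order necessary condition for bifurcation from a constant; by itself this yields neither the degree identity nor the orientation control. As written, your proposal sketches the frame of the proof but defers its core, and rests on a mischaracterization of the $\lambda=0$ solution set that, taken literally, would invalidate both the reduction and the continuation step.
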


\section{Main results}

Throughout this section $U$ will be an open and connected subset of $\R^m\times\R^s$. {}From 
now on, we will denote by $\Pr1$ the projection onto the first factor in the 
Cartesian product $\R^m\times\R^s$. 

We will always assume that $g\colon U\to\R^s$ is a $C^\infty$ function such that 
$\partial_2g(x,y)$ is nonsingular for any $(x,y)\in U$, and that $M=g^{-1}(0)$. Moreover we 
will identify $\xi=(x,y)\in U$. It will also be convenient, given a continuous tangent vector 
field, $f\colon TM\to\R^m\times\R^s$, to denote by $\t f$ an arbitrary extension of the map 
$f|_M\colon (x,y)\mapsto f(x,y,0,0)$ to $U$ (see Remark \ref{remext}) and to let 
$\Pr1\t f(x,y)$ be the projection of $\t f(x,y)$ on $\R^m$ for any $(x,y)\in U$.

\begin{theorem}\label{tuno}
Let $f\colon TM\to\R^m\times\R^s$ and $h\colon\R\times TM\to\R^m\times\R^s$ be continuous 
tangent vector fields, with $h$ of a given period $T>0$ in the first variable.  Define 
$\F\colon U\to\R^m\times\R^s$ by $\F(x,y)=\big(\Pr1\t f(x,y),g(x,y)\big)$ for any 
$\xi=(x,y)\in U$. Given an open set $\Omega\subseteq[ 0,\infty)\times C_T^1(M)$, let 
$\Oc\subseteq\R^m\times\R^s$ be open with the property that $\Oc\cap M=M\cap\Omega$. 
Assume that $\deg (\F,\Oc)$ is well defined and nonzero. Then there exists a connected 
set $\Gamma$ of nontrivial solution pairs for \eqref{uno} in $\Omega$ whose closure in 
$\Omega$ meets $(f|_M)^{-1}(0,0) \cap\Omega$ and is not compact. In particular, if $M$ is 
closed in $\R^m\times\R^s$ and $\Omega=[0,\infty)\times C_T^1(M)$, then $\Gamma$ is 
unbounded.
\end{theorem}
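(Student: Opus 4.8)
The plan is to read Theorem \ref{tuno} as the composition of two ingredients already available: the degree formula of Theorem \ref{formuladeg}, which rewrites the degree of a tangent vector field on $M$ as the degree of an auxiliary map on $U$, and the continuation result of Theorem \ref{ramo2}. All the analytic substance — the fixed point index machinery and the actual construction of the branch $\Gamma$ — is packaged inside Theorem \ref{ramo2}. Consequently, the only thing I must supply is a translation of the working hypothesis ``$\deg(\F,\Oc)\neq 0$'' into the hypothesis ``$\deg\big(f|_M,M\cap\Omega\big)\neq 0$'' demanded by Theorem \ref{ramo2}.

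First I would fix the relevant tangent vector field. The map $f|_M\colon M\to\R^m\times\R^s$, $f|_M(\xi)=f(\xi,0)$, is tangent to $M$ because $f$ is; its extension in the sense of Remark \ref{remext} is $\t f$, whose $\R^m$-projection is $\Pr1\t f$. Hence $\F(x,y)=\big(\Pr1\t f(x,y),g(x,y)\big)$ is precisely the auxiliary map that Theorem \ref{formuladeg} associates with $\psi=f|_M$. I would then record that the two relevant zero sets coincide: for $\xi=(x,y)\in\Oc$ one has $\F(\xi)=0$ iff $g(x,y)=0$ (so $\xi\in M$) and $\Pr1\t f(\xi)=\Pr1 f|_M(\xi)=0$; since $f|_M(\xi)\in T_\xi M$, its second component is determined by its first through \eqref{phi2}, so $\Pr1 f|_M(\xi)=0$ already forces $f|_M(\xi)=0$. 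Using $\Oc\cap M=M\cap\Omega$ this gives
\[
\F^{-1}(0)\cap\Oc=(f|_M)^{-1}(0,0)\cap(M\cap\Oc)=(f|_M)^{-1}(0,0)\cap(M\cap\Omega),
\]
so admissibility of $\F$ in $\Oc$ is equivalent to admissibility of $f|_M$ in $M\cap\Omega$.

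The crux is to localize Theorem \ref{formuladeg} from the pair $(U,g)$ to the pair $(\Oc,g|_{\Oc})$. Since every zero of $\F$ lies on $M\subseteq U$, I may replace $\Oc$ by $\Oc\cap U$ without changing either $\Oc\cap M$ or $\deg(\F,\Oc)$, so I assume $\Oc\subseteq U$. Applying Theorem \ref{formuladeg} to each connected component of $\Oc$ — with $g|_{\Oc}$ in place of $g$ and $(M\cap\Oc)=(g|_{\Oc})^{-1}(0)$ in place of $M$ — and summing over components by additivity yields $\deg\big(f|_M,M\cap\Oc\big)=\s\deg(\F,\Oc)$, the sign $\s$ being the same on each component because $\det\partial_2g$ has constant sign on the connected set $U\supseteq\Oc$. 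As $M\cap\Oc=M\cap\Omega$ and $\s=\pm1$, the hypothesis $\deg(\F,\Oc)\neq 0$ shows that $\deg\big(f|_M,M\cap\Omega\big)$ is well defined and nonzero.

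Finally I would invoke Theorem \ref{ramo2} with $N=M$: the manifold $M=g^{-1}(0)$ is boundaryless, $f$ and $h$ are tangent to $M$, and $h$ is $T$-periodic, so all its hypotheses hold once $\deg\big(f|_M,M\cap\Omega\big)\neq 0$ is known. It then yields a connected set $\Gamma$ of nontrivial solution pairs of \eqref{uno} in $\Omega$ whose closure meets $(f|_M)^{-1}(0,0)\cap\Omega$ and lies in no compact subset of $\Omega$, with the closed case ($M$ closed, $\Omega=[0,\infty)\times C_T^1(M)$, $\Gamma$ unbounded) read off directly from Theorem \ref{ramo2}. I expect the localization in the previous paragraph to be the only real obstacle: the reduction $\Oc\rightsquigarrow\Oc\cap U$, the decomposition into components together with the constancy of $\s$, and the matching of admissibility via the coinciding zero sets are the substantive bookkeeping, after which the statement is a formal consequence of Theorems \ref{formuladeg} and \ref{ramo2}.
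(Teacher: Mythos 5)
Your proof is correct and takes essentially the same route as the paper's: both convert the hypothesis $\deg(\F,\Oc)\neq 0$ into $\deg\big(f|_M,M\cap\Omega\big)\neq 0$ via Theorem \ref{formuladeg} and then invoke Theorem \ref{ramo2} with $N=M$. The only difference is that you spell out the localization of Theorem \ref{formuladeg} from $(U,M)$ to $(\Oc,\Oc\cap M)$ --- the reduction to $\Oc\cap U$, the component-wise application with constant sign $\s$, and the identification of the zero sets guaranteeing matching admissibility --- bookkeeping which the paper's two-line proof leaves implicit.
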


\begin{proof}
By Theorem \ref{formuladeg} we have
\[
|\deg(f|_M,M\cap\Omega)|=|\deg(f|_M,\Oc\cap M)|=|\deg(\F,\Oc)|.
\]
Thus, $\deg(f,M\cap\Omega)\neq 0$ and the assertion follows {}from Theorem \ref{ramo2}.
\end{proof}

\bigskip
The following result concerns Equation \eqref{due}. 
As in the previous section, given $h\colon\R\times TM\to\R^k$ tangent to $M$,
we define the tangent vector field $w_h$ on $M$ by \eqref{mean}.
Moreover by $\Pr1 \t w_h(x,y)$ we denote the projection on $\R^m$ of an
arbitrary extension $\t w_h(x,y)$ of the map $w_h(x,y)$  to $U$.

\begin{theorem}\label{tdue}
Let $h\colon \R\times TM\to\R^m\times\R^s$ be a continuous tangent vector field, of a given 
period $T>0$ in the first variable. Define $\Phi\colon U\to\R^m\times\R^s$ by 
$\Phi(x,y)=\big(\Pr1 \t w_h(x,y),g(x,y)\big)$ for any $\xi=(x,y)\in U$. Given an open 
set $\Omega\subseteq [0,\infty)\times C_T^1(M)$, let $\Oc\subset\R^m\times\R^s$ be an 
open subset with the property that $\Oc\cap M=M\cap\Omega$. Assume that $\deg(\Phi,\Oc)$ 
is well defined and nonzero. Then there exists a connected set $\Gamma$ of nontrivial 
solution pairs for \eqref{due} in $\Omega$ whose closure in $\Omega$ is not compact and 
meets the set $w_h^{-1}(0)\cap\Omega$. In particular, if $M$ is closed in 
$\R^m\times\R^s$ and $\Omega=[0,\infty)\times C_T^1(M)$, then $\Gamma $ is unbounded.
\end{theorem}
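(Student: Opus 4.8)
The plan is to mirror the argument used for Theorem~\ref{tuno}, reducing the statement to Theorem~\ref{ramo1} by means of the degree formula of Theorem~\ref{formuladeg}. The only genuinely new point to check at the outset is that the mean value field $w_h$ defined by \eqref{mean} is a tangent vector field on $M$: since $h(t,\xi,0)\in T_\xi M$ for every $t$, and $T_\xi M$ is a linear subspace stable under integration, the average $w_h(\xi)$ again lies in $T_\xi M$. Hence $w_h\colon M\to\R^m\times\R^s$ is a tangent field to which Theorem~\ref{formuladeg} applies, with the map $\Phi$ of the statement playing the role of the auxiliary map $\F$ there.

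Next I would transfer the nonvanishing of the degree from the $U$-side to the $M$-side. Applying Theorem~\ref{formuladeg} to $\psi=w_h$, with extension $\t w_h$ and associated map $\Phi$, yields
\[
|\deg(w_h,M\cap\Omega)|=|\deg(w_h,\Oc\cap M)|=|\deg(\Phi,\Oc)|,
\]
where the first equality uses the hypothesis $\Oc\cap M=M\cap\Omega$ and the second is \eqref{idgradi} up to the constant sign $\s$ of $\det\partial_2 g$. The delicate step is that Theorem~\ref{formuladeg} is stated for the whole pair $(U,M)$, whereas here it must be used on the open subset $\Oc$; I would handle this by taking $\Oc$ itself as the ambient open set in the role of $U$, noting that $g|_{\Oc}$ still satisfies the nonsingularity requirement on $\partial_2 g$ and that $M\cap\Oc=M\cap\Omega$, so every hypothesis of Theorem~\ref{formuladeg} persists after restriction (and, by that same theorem, admissibility of $\Phi$ on $\Oc$ is equivalent to admissibility of $w_h$ on $M\cap\Omega$). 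Since $\deg(\Phi,\Oc)$ is well defined and nonzero by assumption, the displayed chain forces $\deg(w_h,M\cap\Omega)$ to be well defined and nonzero as well.

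Finally, with $\deg(w_h,M\cap\Omega)\neq 0$ in hand, I would invoke Theorem~\ref{ramo1} with $N=M$ and the given open set $\Omega$. That result produces directly a connected set $\Gamma$ of nontrivial solution pairs for \eqref{due} contained in $\Omega$, whose closure in $\Omega$ is noncompact and meets $w_h^{-1}(0)\cap\Omega$, which is exactly the assertion; the concluding ``in particular'' clause is inherited verbatim from the corresponding clause of Theorem~\ref{ramo1} once $M$ is closed and $\Omega=[0,\infty)\times C_T^1(M)$.

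I expect the main obstacle to be the bookkeeping in this localization: making precise that the degree identity \eqref{idgradi}, stated globally, may be applied on $\Oc$, and that the admissibility conditions match up under the restriction. Everything else is a faithful transcription of the $f|_M$ argument, with $f|_M$ replaced by $w_h$, $\F$ by $\Phi$, and Theorem~\ref{ramo2} by Theorem~\ref{ramo1}.
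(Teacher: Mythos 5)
Your proposal is correct and follows essentially the same route as the paper: the paper's proof is exactly the chain $|\deg(w_h,M\cap\Omega)|=|\deg(w_h,\Oc\cap M)|=|\deg(\Phi,\Oc)|$ obtained from Theorem~\ref{formuladeg}, followed by an appeal to Theorem~\ref{ramo1}. The extra details you supply (tangency of $w_h$, and the localization of Theorem~\ref{formuladeg} to $\Oc$ --- where one should only note that connectedness of $\Oc$ is not needed since $\det\partial_2 g$ already has constant sign on all of $U$) are points the paper leaves implicit, not deviations from its argument.
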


\begin{proof}
By Theorem \ref{formuladeg} we have
\[
|\deg(w_h,M\cap\Omega)|=|\deg(w_h,\Oc\cap M)|=|\deg(\Phi,\Oc)|.
\]
Thus, 
$\deg(w_h, M\cap\Omega)\neq 0$ and the assertion follows {}from Theorem \ref{ramo1}.
\end{proof}

\begin{remark}\label{remfeF}
 It is worth noticing that, in Theorem \ref{tuno}, a point $(x,y)\in\R^m\times\R^s$
belongs to $(f|_M)^{-1}(0,0)$ if and only if $\F(x,y)=0$. Similarly, in Theorem 
\ref{tdue}, $(x,y)\in w_h^{-1}(0)$ if and only if $\Phi(x,y)=0$.
\end{remark}

\begin{example}\label{ex.gravita}
Consider the simple case when $m=s=1$, $U=\R\times\R$ and 
$g(x,y)=\frac{1}{3}(x^2+1)y+\frac{1}{27}y^3+x$. Assume that a point $\PP$ of mass $\mm$ 
is constrained without friction to the curve $M=g^{-1}(0)$ lying on a vertical plane 
(see figure \ref{fig.essegiu}) and acted upon by the gravitational force 
$\varphi(x,y)=(0,-\mm\gb)^T$. Here, $\gb$ denotes the gravitational constant. The motion 
of $\PP$ is determined by the following second order equation on $M$:
\[
\mm\ddot\xi_\pi=\Prj_\xi \varphi(\xi)
\]
where $(x,y)$ and $\Prj_\xi$ denotes the orthogonal projection of $\R^2$ onto $T_\xi M$. 
In order to apply Theorems \ref{tuno} or \ref{tdue} it is necessary to compute 
$\Prj_\xi\varphi(\xi)$. This is readily done, in fact,
\[
f(\xi):=\Prj_\xi\varphi(\xi)
       =\varphi(\xi)- \frac{\left<\varphi(\xi),\nabla g(\xi)\right>}{\|\nabla g(\xi)\|^2}\nabla g(\xi),
\]
where `$\left<\cdot,\cdot\right>$' denotes the scalar product. In coordinates, we get the rather 
unappealing formula below:
\[
f(x,y)=
\frac{3\mm\gb}{(6xy+9)^2+(y^2+3x^2+3)^2} 
\begin{pmatrix}
(y^2+3x^2+3)(2xy+3)\\
-3(2xy+3)
\end{pmatrix}.
\]

\begin{figure}[ht!]
  \centering
    \psset{xunit=1cm,yunit=1cm }
    \begin{pspicture}(-6, -1.7)(6,2) 
     \uput[u](3.9,0){\scriptsize $x$}
     \uput[l](0,1.8){\scriptsize $y$}
       \parametricplot[linecolor=gray,plotpoints=150,linewidth=0.3pt]
             {1}{10}{t -1 add t t mul 1 add div sqrt t mul neg   t -1 add t t mul 1 add div sqrt 3 mul }
       \parametricplot[linecolor=gray,plotpoints=150,linewidth=0.3pt]
             {1}{10}{t -1 add t t mul 1 add div sqrt t mul  t -1 add t t mul 1 add div sqrt 3 mul neg }
     \psline[linewidth=0.4pt]{->}(-4, 0)(4, 0) %
     \psline[linewidth=0.4pt]{->}(0,-1.6)(0,1.9)
      \psline[linewidth=0.6pt]{->}(2.4,1.4)(2.4,0.4)
      \uput[r](2.4,0.8){\scriptsize gravity}
 \uput[ur](-2.2,1.2){\scriptsize $\xi(t)$}
     \psdot(-1.96,1.17)
     \psline{->}(-1.96,1.17)(-1.96,0.17) 
     \psline{->}(-1.96,1.17)(-2.23,1.09)
     \psline[linestyle=dotted]{*-o}(1.0987,0)(1.0987,-1.3652)
     \uput[u](1.0987,0){\scriptsize $\frac{1}{2}\sqrt{2+2\sqrt{2}}$}
     \psline[linestyle=dotted]{*-o}(0,-1.3652)(1.0987,-1.3652)
     \uput[l](0,-1.3652){\scriptsize $-\frac{3}{\sqrt{2+2\sqrt{2}}}$}
     \psline[linestyle=dotted]{*-o}(0,1.3652)(-1.0987,1.3652)
     \uput[r](0,1.3652){\scriptsize $\frac{3}{\sqrt{2+2\sqrt{2}}}$}
     \psline[linestyle=dotted]{*-o}(-1.0987,0)(-1.0987,1.3652)
     \uput[d](-1.0987,0){\scriptsize $-\frac{1}{2}\sqrt{2+2\sqrt{2}}$}
     \uput*[ur](-1.96,0.35){\scriptsize $\varphi\big(\xi(t)\big)$}
     \uput[l](-2,1.27){\scriptsize $f\big(\xi(t)\big)$}
    \end{pspicture}
  \caption{The setting of Example \ref{ex.gravita}.}\label{fig.essegiu}
\end{figure}
Simple considerations show that there are exactly two zeros of $f(x,y)$ in $U$ corresponding 
to the two real roots of the polynomial $4x^4-4x^2-1$, that is $x=\pm\frac{1}{2}\sqrt{2+2\sqrt{2}}$.
Now, let $\Oc=(-\infty,0)\times\R$, $\Omega=[0,\infty)\times C^1_T\big(\Oc\big)$, and
\[
 \F(x,y)=\left(\frac{3\mm\gb(y^2+3x^2+3)(2xy+3)}{(6xy+9)^2+(y^2+3x^2+3)^2}\,,\, 
              \frac{1}{3}(x^2+1)y+\frac{1}{27}y^3+x\right).
\]
A lengthy but straightforward computation shows that $\deg(\F,\Oc)=1$. Hence, Theorem \ref{tuno} 
yields that if any $T$-periodic force $\lambda h(t,\xi,\dot \xi)$, $\lambda\geq 0$, is added to 
$f$ there exists an unbounded connected set $\Gamma$ of nontrivial solution pairs for \eqref{uno} 
in $\Omega$ whose closure in $\Omega$ meets $f^{-1}(0)\cap\Omega$.
\end{example}

\begin{figure}[ht!]
  \centering
    \psset{xunit=0.6cm,yunit=0.6cm }
    \begin{pspicture}(-4, -2.5)(4,3) 
     \psline[linewidth=0.5pt]{->}(-3.7, 0)(3.7, 0) %
     \psline[linewidth=0.5pt]{->}(0,-2.5)(0,3)
     \uput[u](3.6,0){\scriptsize $x$}
     \uput[l](0,2.4){\scriptsize $y$}
     \parabola[linewidth=0.8pt](3,2.5)(0,-2)
     \pscoil[coilarm=.5cm,linewidth=0.7pt,coilwidth=.25cm]{*-*}(0,0)(2.5,1.125)
     \uput[ul](2.5,1.125){\scriptsize $\PP$}
     \uput[ul](0,0){\scriptsize $O$}
     \psline{->}(2.5,1.125)(1.5,-1.375)
     \uput[r](1.5,-1.375){\scriptsize $f$}
    \end{pspicture}
  \caption{The setting of Example \ref{ex.parabolamolla}.}\label{fig.parabolamolla}
\end{figure}
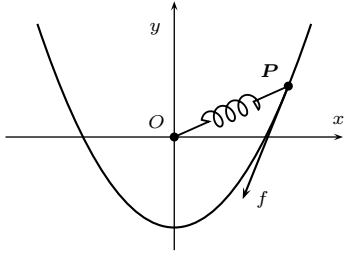
\begin{example}\label{ex.parabolamolla} 
Consider a unit mass point $\PP$ constrained without friction to the parabola $M$ of Example 
\ref{parabola1}, of equation $\frac{1}{2}x^2-y-2=0$ lying on an horizontal plane (so that 
the weight has no effect on the motion). 
Assume that the extremities of a spring with elastic coefficient $\kk$ and negligible mass are attached 
one to $\PP$ and the other to the origin $O$. (See Figure \ref{fig.parabolamolla}.) It is easy to 
verify that the active force acting on $\PP$ is given by
\[
 f(x,y) = -\frac{\kk(y+1)}{x^2+1}\begin{pmatrix}
                             x\\
                             x^2
                            \end{pmatrix}.
\]
Clearly $f|_M$ is a vector field tangent to $M$, and the motion of $\PP$ is described by the  
equation $\ddot\xi_\pi=f(\xi)$ on $M$.
Here again $m=s=1$ and $U=\R\times\R$. 
Let
\[
 \F(x,y)=\left(-\frac{\kk x(y+1)}{x^2+1},\frac{1}{2}x^2-y-2\right),
\]
which is clearly defined on $U$,
and let $\Omega=[0,\infty)\times C_T^1(M)$. Since $\deg(\F,U)=1$, Theorem \ref{tuno} shows that 
if any $T$-periodic force $\lambda h(t,\xi,\dot \xi)$, $\lambda\geq 0$, is added to $f$, then 
there exists an unbounded connected set $\Gamma$ of nontrivial solution pairs for \eqref{unoedue} 
in $\Omega$ whose closure in $\Omega$ meets 
$f^{-1}(0)\cap\Omega=\{(-\sqrt{2},-1),(0,-2),(\sqrt{2},-1)\}$.
\end{example}

\section{Applications to a class of Differential-Algebraic Equations} 

Let us now consider applications to semi-explicit DAEs of the 
following forms on an open set $U\subseteq\R^m\times\R^s$:

\begin{equation}\label{ord2dae1}
 \left\{
  \begin{array}{l}
   \ddot x = \f(x,y,\dot x,\dot y)+\lambda \h(t,x,y,\dot x,\dot y),\\
   \g(x,y)=0,
  \end{array}\right.
\end{equation}
and 
\begin{equation}\label{ord2dae2}
 \left\{
  \begin{array}{l}
   \ddot x = \lambda \h(t,x,y,\dot x,\dot y),\\
   \g(x,y)=0,
  \end{array}\right.
\end{equation}
where we assume that $\f\colon TU\to\R^m\times\R^s$ and $\h\colon\R\times TU\to\R^m\times\R^s$ 
are continuous maps, $\h$ is $T$-periodic in the first variable, and $\g\colon U\to\R^s$ is 
$C^\infty$ and such that $\partial_2\g(x,y)$ is invertible for all $(x,y)\in U$.

Let us discuss briefly the notion of solution of DAEs as \eqref{ord2dae1} and \eqref{ord2dae2}.
In general, let $\E\colon\R\times TU\to\R^m$ be a continuous map and let $\g$ be as in 
\eqref{ord2dae1} and \eqref{ord2dae2}. Consider the equation
\begin{equation}\label{gendae}
  \left\{
  \begin{array}{l}
   \ddot x = \E(t,x,y,\dot x,\dot y),\\
   \g(x,y)=0.
  \end{array}\right.
\end{equation}
By a \emph{solution} of \eqref{gendae} we mean a pair of $C^1$ functions $x\colon J\to\R^k$ 
and $y\colon J\to\R^s$, $J$ an interval, with the property that 
$\ddot x(t)=\E(t,x(t),y(t),\dot x(t),\dot y(t))$ and $\g\big(x(t),y(t)\big)=0$ for all $t\in J$. 
Differential-Algebraic Equations as \eqref{gendae} are closely related to second order 
differential equations on manifolds as described in Section \ref{secsec}.
It is now our aim to clarify such a connection.

Let $(x,y)$ be a solution of \eqref{gendae}. Differentiating twice the relation 
$\g\big(x(t),y(t)\big)=0$ with respect to $t$, we get
\[
 \D_1\g\big(x(t),y(t)\big)\ddot x(t)+\D_2\g\big(x(t),y(t)\big)\ddot y(t)
           +\g''\big(x(t),y(t)\big)\big(\dot x(t),\dot y(t)\big)=0 .
\]

Thus, recalling that $\ddot x(t) = \E\big(t,x(t),y(t),\dot x(t),\dot y(t)\big)$, we get
\begin{multline*}
\ddot y(t)
   =-\big[\D_2\g\big(x(t),y(t)\big)\big]^{-1}
             \Big(\D_1\g\big(x(t),y(t)\big)\E\big(t,x(t),y(t),\dot x(t),\dot y(t)\big)\\
                             +\g''\big(x(t),y(t)\big)\big(\dot x(t),\dot y(t)\big)\Big) .
\end{multline*}

For $t\in\R$, $p,u\in\R^m$ and $q,v\in\R^s$ (hence $(p,q,u,v)\in TU$),
let

\[
 \t\E(t,p,q,u,v)=\begin{pmatrix}
           \E(t,p,q,u,v)\\
-\big[\D_2\g(p,q)\big]^{-1}\Big(\D_1\g(p,q)\E\big(t,p,q,u,v\big)
                                             +\g''(p,q)\big(u,v\big)\Big)
                 \end{pmatrix}
\]

The following proposition shows that DAEs as \eqref{gendae} can be regarded as second order 
ODEs on differential manifolds as discussed in Section \ref{secsec}.

\begin{proposition}\label{propeq}
Equation \eqref{gendae} is equivalent to the following second order ODE on $M=\g^{-1}(0)$:
\begin{equation}\label{prjgendae}
 \ddot \xi_\pi = \Prj_{\xi}\t\E(t,\xi,\dot \xi),
\end{equation}
in the sense that $\big(x(t),y(t)\big)$ is a solution of \eqref{gendae} if and only if
$\xi(t)=\big(x(t),y(t)\big)$ is a solution of \eqref{prjgendae}. Here $\Prj_{(p,q)}$ denotes
the orthogonal projection of $\R^m\times\R^s$ onto $T_{(p,q)}M$.
\end{proposition}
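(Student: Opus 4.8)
The plan is to reduce everything to a single observation about how the full Euclidean acceleration of a curve on $M$ splits into its tangential and normal parts, the normal part being forced by the constraint. The starting point is a purely algebraic identity satisfied by $\t\E$: for every $(t,p,q,u,v)\in\R\times TU$ one has
\[
\g'(p,q)\,\t\E(t,p,q,u,v)=-\g''(p,q)\big(u,v\big).
\]
Indeed, writing $\g'(p,q)=\big(\D_1\g(p,q),\D_2\g(p,q)\big)$ and substituting the second component of $\t\E$ from its definition, the two terms containing $\D_1\g\,\E$ cancel, leaving $-\g''(p,q)(u,v)$. I would record this first, noting that it holds verbatim with no tangency hypothesis on $(u,v)$.

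Next I would interpret this identity on $TM$. Fix $(\xi,\eta)\in TM$ and split $\t\E(t,\xi,\eta)=\Prj_\xi\t\E+\Prj_\xi^\perp\t\E$. Since $\Prj_\xi\t\E\in T_\xi M=\Ker\g'(\xi)$, the identity above gives $\g'(\xi)\,\Prj_\xi^\perp\t\E=-\g''(\xi)\big(\eta,\eta\big)$, while $\Prj_\xi^\perp\t\E\in(T_\xi M)^\perp$ by construction. By the characterization of the inertial reaction as the \emph{unique} vector of $(T_\xi M)^\perp$ solving \eqref{sodiff2bis} (applied with $\g$ in the role of $g$), this identifies the normal component of $\t\E$ with $r$:
\[
\Prj_\xi^\perp\,\t\E(t,\xi,\eta)=r(\xi,\eta),\qquad (\xi,\eta)\in TM.
\]
I expect this identification to be the conceptual core of the argument; once it is in place the two implications become routine.

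For the forward implication, let $(x,y)$ solve \eqref{gendae}. As $\ddot x=\E$ is continuous we get $x\in C^2$, and since $\D_2\g$ is invertible the implicit function theorem represents $y$ locally as a $C^\infty$ graph over $x$, whence $\xi=(x,y)\in C^2$ with $\xi(t)\in M$. Differentiating $\g\big(x(t),y(t)\big)=0$ twice gives
\[
\D_1\g\,\ddot x+\D_2\g\,\ddot y+\g''\big(\dot\xi,\dot\xi\big)=0,
\]
and solving for $\ddot y$ after inserting $\ddot x=\E$ reproduces precisely the definition of $\t\E$, so that $\ddot\xi=\t\E(t,\xi,\dot\xi)$; projecting onto $T_\xi M$ yields $\ddot\xi_\pi=\Prj_\xi\t\E$, i.e.\ \eqref{prjgendae}. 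Conversely, let $\xi=(x,y)$ be a $C^2$ solution of \eqref{prjgendae}; then $\g(x,y)=0$ because $\xi$ is $M$-valued, and \eqref{prjxi} gives $\Prj_\xi^\perp\ddot\xi=r(\xi,\dot\xi)$. Combining \eqref{prjgendae} with the displayed identification of $\Prj_\xi^\perp\t\E$ with $r$,
\[
\ddot\xi=\Prj_\xi\ddot\xi+\Prj_\xi^\perp\ddot\xi=\Prj_\xi\t\E+r(\xi,\dot\xi)=\Prj_\xi\t\E+\Prj_\xi^\perp\t\E=\t\E(t,\xi,\dot\xi),
\]
so reading off the $\R^m$-component gives $\ddot x=\E$, which together with $\g(x,y)=0$ shows that $(x,y)$ solves \eqref{gendae}.
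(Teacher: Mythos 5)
Your proof is correct, but it reaches the crucial identification by a genuinely different route than the paper. Both arguments turn on showing that the normal component of $\t\E$ coincides with the reactive force, i.e.\ $\Prj_\xi^\perp\t\E(t,\xi,\eta)=r(\xi,\eta)$ for $(\xi,\eta)\in TM$, and once that is in place the two implications are assembled the same way (the paper's converse uses \eqref{phipiur}, which is the same bookkeeping as your orthogonal splitting of $\ddot\xi$). The difference is how the identification is obtained. You get it pointwise and purely algebraically: the identity $\g'(p,q)\,\t\E(t,p,q,u,v)=-\g''(p,q)(u,v)$ holds on all of $\R\times TU$ by construction of $\t\E$, and together with the uniqueness of the solution of \eqref{sodiff2bis} lying in $(T_\xi M)^\perp$ (equivalently, formula \eqref{formr}) it forces $\Prj_\xi^\perp\t\E=r$. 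The paper argues dynamically instead: it considers an $M$-valued curve satisfying $\ddot\xi\equiv\t\E(t,\xi,\dot\xi)$, applies $\Prj^\perp$ together with \eqref{prjxi}, and invokes ``the arbitrariness of $\xi$''. That argument tacitly requires an $M$-valued solution of $\ddot\xi=\t\E$ through \emph{every} point of $TM$, an existence-plus-invariance fact the paper never establishes --- and which, ironically, is most easily proved from exactly your algebraic identity, since it yields $\tfrac{d^2}{dt^2}\,\g(\xi(t))\equiv 0$ along any solution in $U$. So your route is not only valid but more self-contained than the published one; it also supplies the regularity detail in the forward implication (that $y\in C^2$, via the implicit function theorem) which the paper dismisses with ``clearly''.
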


\begin{proof}
Clearly any solution of \eqref{gendae} that meets $M$ satisfies also \eqref{prjgendae}. 

Let us prove the converse. According to Equation \eqref{prjxi} one has that for any $M$-valued 
$C^2$ curve $t\mapsto\xi(t)$ one has that, for any $t$,
\[
 \Prj_{\xi(t)}^\perp\ddot\xi(t)=r\big(\xi(t),\dot\xi(t)\big)
\]
where $\Prj_{\xi(t)}^\perp$ denotes the projection of the ambient space $\R^k=\R^m\times\R^s$ 
onto the orthogonal complement $(T_{\xi(t)}M)^\perp$ of $T_{\xi(t)}M$, and $r\colon TM\to\R^k$ 
is the map given by
\[
 r\big(\xi,\eta\big)=
     -\left(\g'(\xi)|_{T_\xi M^\perp}\right)^{-1}\g''(\xi)(\eta,\eta)\in (T_\xi M)^\perp
\]
for all $\xi\in M$ and $\eta\in T_\xi M$. Let us now see what relation exists between $r$ 
and $\t\E$. Observe that, given any $M$-valued curve $\xi$ for which 
$\ddot\xi(t)\equiv\t\E\big(t,\xi(t),\dot\xi(t)\big)$, applying $\Prj_{\xi(t)}^\perp$ on both 
sides of this equality we get
\[
 r\big(\xi(t),\dot\xi(t)\big)=\Prj_{\xi(t)}^\perp\ddot\xi
                                 =\Prj_{\xi(t)}^\perp\t\E\big(t,\xi(t),\dot\xi(t)\big),
\]
whenever defined. The arbitrariness of $\xi$ shows that
\[
 r(\xi,\eta)=\Prj_{\xi}^\perp\t\E(t,\xi,\eta)
\]
for all $\xi\in M$, $\eta\in T_\xi M$ and $t\in\R$.

Let now $\xi\colon J\subseteq\R\to U$ be a solution of \eqref{prjgendae}. 
From equation
\eqref{phipiur} we get that, whenever defined,
\[
 \ddot\xi(t)=\Prj_{\xi(t)}\t\E\big(t,\xi(t),\dot\xi(t)\big)+r\big(\xi(t),\dot\xi(t)\big)
\]
Hence, 
\[
 \ddot\xi(t)=\Prj_{\xi(t)}\t\E\big(t,\xi(t),\dot\xi(t)\big)
                       +\Prj_{\xi(t)}\t\E\big(t,\xi(t),\dot\xi(t)\big)
            =\t\E\big(t,\xi(t),\dot\xi(t)\big),
\]
and the assertion is proved.
\end{proof}

We observe that
the equivalence of \eqref{gendae} and \eqref{prjgendae} could be proven also using the 
explicit expression for $r$ found in Section \ref{secsec} (see formula \eqref{explicit}).

\smallskip
The equivalence shown by Proposition \ref{propeq} allows us to apply the results of the 
previous sections to Equations \eqref{ord2dae1} and \eqref{ord2dae2} by the means of 
the corresponding ODEs on $M$.  

Let $\f$, $\g$, and $\h$ be as above. Define the maps $\Psi\colon T(U)\to\R^m\times\R^s$ 
and $\Upsilon\colon\R\times T(U)\to\R^m\times\R^s$ by 
\begin{multline*}
\Psi(p,q,u,v)=\\
  \left(\f(p,q,u,v),
      -\big[\D_2\g(p,q)\big]^{-1}\Big(\D_1\g(p,q)\f\big(p,q,u,v\big)
                                            +\g''(p,q)\big(u,v\big)\Big)\right),
\end{multline*}
and
\begin{multline*}
\Upsilon(t,p,q,u,v)=\\
\left(
   \h(t,p,q,u,v),
     -\big[\D_2\g(p,q)\big]^{-1}\Big(\D_1\g(p,q)\h\big(t,p,q,u,v\big)
                                              +\g''(p,q)\big(u,v\big)\Big)\right).
\end{multline*}
We have that \eqref{ord2dae1} and \eqref{ord2dae2} are equivalent, respectively, to
\begin{equation}\label{ord2ode1}
\ddot\xi_\pi=\Prj_\xi\Psi(\xi,\dot \xi)+\lambda\Prj_\xi\Upsilon(t,\xi,\dot\xi),
\end{equation}
and
\begin{equation}\label{ord2ode2}
\ddot\xi_\pi=\lambda\Prj_\xi\Upsilon(t,\xi,\dot\xi),
\end{equation}
where $\xi=(x,y)$.

We now use Equations \eqref{ord2ode1} and \eqref{ord2ode2} to derive, by the means 
of Theorems \ref{tuno} and \ref{tdue}, results concerning the sets of solution pairs 
for Equations \eqref{ord2dae1} and \eqref{ord2dae2}. 

Let us consider first Equation \eqref{ord2dae1}. We define $\f_0(p,q):=\f(p,q,0,0)$ 
and
\[
\Psi_0(p,q):=\Psi(p,q,0,0)=\left(\f_0(p,q), 
                  -\big[\D_2\g(p,q)\big]^{-1}\D_1\g(p,q)\f_0\big(p,q\big)\right).
\]
Observe that $\Psi_0$ is tangent to $M$ in the sense that
$\Psi_0(p,q)\in T_{(p,q)}M$ for any $(t,p,q)\in\R\times M$.
Hence, 
\[
\Pr1\Prj_{(p,q)}\Psi_0(p,q)=\f_0(p,q).
\]
Then {}from Theorem \ref{tuno} and Remark \ref{remfeF} we get the following.

\begin{corollary}\label{tunoB}
Let $\f$, $\h$, $\g$ and $U$ be as above. For any $(x,y)\in U$, define 
$\F\colon U\to\R^m\times\R^s$ by 
\[
\F(x,y)=\big(\f_0(x,y),\g(x,y)\big).
\]
Let $\Omega\subseteq[0,\infty)\times C_T^1(\R^m\times\R^s)$ be open. Assume that 
$\deg(\F, U\cap\Omega)$ is well defined and nonzero. Then there exists a connected set 
$\Gamma$ of nontrivial solution pairs for \eqref{ord2dae1} in $\Omega$ whose closure in 
$\Omega$ meets $\F^{-1}(0)\cap\Omega$ and is not compact. In particular, if $\g^{-1}(0)$ 
is a closed subset of $\R^m\times\R^s$ and $\Omega=[0,\infty)\times C_T^1(\R^m\times\R^s)$, 
then $\Gamma$ is unbounded.
\end{corollary}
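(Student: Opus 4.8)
The plan is to deduce the statement from Theorem \ref{tuno} through the equivalence furnished by Proposition \ref{propeq}. First I would invoke Proposition \ref{propeq} to identify the DAE \eqref{ord2dae1} with the second order ODE \eqref{ord2ode1} on $M=\g^{-1}(0)$. That equation is exactly of the form \eqref{uno}, with unperturbed tangent force $f(\xi,\eta)=\Prj_\xi\Psi(\xi,\eta)$ and periodic perturbation $h(t,\xi,\eta)=\Prj_\xi\Upsilon(t,\xi,\eta)$; these are continuous tangent vector fields on $M$, and $h$ is $T$-periodic in $t$ since $\Upsilon$ inherits the period of $\h$. Moreover, by Proposition \ref{propeq} the solution pairs of the two problems correspond under $\xi=(x,y)$, so it suffices to produce the desired branch for \eqref{ord2ode1} and transport it back.

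Next I would assemble the data needed by Theorem \ref{tuno}. Setting the velocity to zero gives $f|_M(\xi)=\Prj_\xi\Psi_0(\xi)$; since $\Psi_0$ is tangent to $M$ (as observed just before the statement), on $M$ the projection acts as the identity, whence $f|_M=\Psi_0|_M$ and $\Pr1 f|_M=\f_0$ on $M$. As $\Psi_0$ is already defined on all of $U$, I would take it as the extension $\t f$ of $f|_M$; this is legitimate because the map $\F$ in Theorem \ref{tuno} depends only on $\Pr1$ of an arbitrary extension. With this choice $\Pr1\t f=\f_0$ throughout $U$, so the map $\F(x,y)=\big(\Pr1\t f(x,y),\g(x,y)\big)$ of Theorem \ref{tuno} coincides with the map $\F(x,y)=\big(\f_0(x,y),\g(x,y)\big)$ of the statement.

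I would then reconcile the open sets, which is the one point calling for care. Put $\Oc=U\cap\Omega$, an open subset of $\R^m\times\R^s$, and pass to $\Omega_M=\Omega\cap\big([0,\infty)\times C_T^1(M)\big)$, an open subset of $[0,\infty)\times C_T^1(M)$. Since $M\subseteq U$, one checks $\Oc\cap M=M\cap\Omega_M$, so the compatibility hypothesis of Theorem \ref{tuno} holds, and the assumption $\deg(\F,U\cap\Omega)\neq 0$ reads $\deg(\F,\Oc)\neq 0$. Theorem \ref{tuno} then yields a connected set $\Gamma\subseteq\Omega_M$ of nontrivial solution pairs for \eqref{ord2ode1} whose closure in $\Omega_M$ is noncompact and meets $(f|_M)^{-1}(0,0)\cap\Omega_M$; by Remark \ref{remfeF} this last set equals $\F^{-1}(0)\cap\Omega$.

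Finally I would transfer the conclusion back to \eqref{ord2dae1} and into the ambient function space. Every solution pair of the DAE has its component constrained to $M$, so $\Gamma$ lies in $[0,\infty)\times C_T^1(M)$, a metric subspace of $[0,\infty)\times C_T^1(\R^m\times\R^s)$; hence $\Gamma$ remains connected, noncompact, and meeting $\F^{-1}(0)\cap\Omega$ when regarded in the larger space. For the last assertion, if $M$ is closed and $\Omega=[0,\infty)\times C_T^1(\R^m\times\R^s)$ then $\Omega_M=[0,\infty)\times C_T^1(M)$ and $U\cap\Omega=U$, so the \emph{in particular} clause of Theorem \ref{tuno} gives unboundedness of $\Gamma$ in $C_T^1(M)$, which persists in $C_T^1(\R^m\times\R^s)$ because the $C^1$-norm is the inherited one. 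The only genuine (and modest) obstacle is precisely this bookkeeping between $C_T^1(M)$ and $C_T^1(\R^m\times\R^s)$, namely verifying that openness, the identity $\Oc\cap M=M\cap\Omega_M$, and the noncompactness/unboundedness properties survive the inclusion.
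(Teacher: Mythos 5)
Your proposal is correct and is essentially the paper's own argument: the paper likewise reduces \eqref{ord2dae1} to the ODE \eqref{ord2ode1} via Proposition \ref{propeq}, observes that $\Psi_0$ is tangent to $M$ so that the unperturbed field evaluated at zero velocity has first component $\f_0$ (making the map $\F$ of Theorem \ref{tuno} coincide with the $\F$ of the statement), and then invokes Theorem \ref{tuno} together with Remark \ref{remfeF}. The only point where you go beyond the paper is the bookkeeping between $C_T^1(M)$ and $C_T^1(\R^m\times\R^s)$ --- including the legitimate choice $\t f=\Psi_0$ as the extension and the identity $\Oc\cap M=M\cap\Omega_M$ --- which the paper passes over in silence, so your write-up is, if anything, more careful than the original.
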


A similar argument allows us to deduce the following consequence of Theorem \ref{tdue}.
We focus on Equation \eqref{ord2dae2}. We define $\h_0(t,p,q):=\h(t,p,q,0,0)$ and
the tangent vector field $w_{\h_0}$ on $M$ as in \eqref{mean} with $\xi$ in place of 
$(p,q)$.

Since, clearly, $w_{\Prj_{(p,q)}\Upsilon}\in T_{(p,q)}M$ for any $(p,q)\in U$, one has 
that
\[
\Pr1w_{\Prj_{(p,q)}\Upsilon}(p,q)=w_{\h_0}(p,q).
\]
Hence {}from Theorem \ref{tdue} and Remark \ref{remfeF} we get

\begin{corollary}\label{tunoC}
Let $\h$ and $\g$ be as above. For any $(x,y)\in U$, define $\F\colon U\to\R^m\times\R^s$ by 
\[
\F(x,y)=\big(w_{\h_0}(x,y),\g(x,y)\big).
\]
Let $\Omega\subseteq[0,\infty)\times C_T^1(\R^m\times\R^s)$ be open. Assume that 
$\deg(\F,U\cap\Omega)$ is well defined and nonzero. Then there exists a connected set 
$\Gamma$ of nontrivial solution pairs for \eqref{ord2dae2} in $\Omega$ whose closure in 
$\Omega$ meets $\F^{-1}(0)\cap\Omega$ and is not compact. In particular, if $\g^{-1}(0)$ 
is a closed subset of $\R^m\times\R^s$ and $\Omega=[0,\infty)\times C_T^1(\R^m\times\R^s)$, 
then $\Gamma$ is unbounded.
\end{corollary}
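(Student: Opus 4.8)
The plan is to apply Theorem~\ref{tdue} to the second order ODE \eqref{ord2ode2}, which by the equivalence recorded just before the statement (together with Proposition~\ref{propeq}) describes the same solution pairs as the DAE \eqref{ord2dae2}. Indeed, \eqref{ord2ode2} is an equation of the form \eqref{due} on the manifold $M=\g^{-1}(0)$ with tangent forcing field $h(t,\xi,\eta)=\Prj_\xi\Upsilon(t,\xi,\eta)$; this $h$ is tangent to $M$ because $\Prj_\xi$ is the orthogonal projection onto $T_\xi M$. Thus a pair $(\lambda,\xi)$ is a nontrivial solution pair of \eqref{ord2dae2} if and only if it is one of \eqref{due} for this $h$, and it suffices to produce the asserted set $\Gamma$ for the latter equation.

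The heart of the matter is to identify the map $\Phi$ attached to $h=\Prj_\xi\Upsilon$ by Theorem~\ref{tdue} with the map $\F$ of the statement. First I would note that at zero velocity $\Upsilon(t,\xi,0)=\big(\h_0(t,\xi),-[\D_2\g(\xi)]^{-1}\D_1\g(\xi)\h_0(t,\xi)\big)$ is already tangent to $M$: writing $\g'(\xi)=\big(\D_1\g(\xi),\D_2\g(\xi)\big)$ one computes $\g'(\xi)\Upsilon(t,\xi,0)=\D_1\g\,\h_0-\D_2\g[\D_2\g]^{-1}\D_1\g\,\h_0=0$, the quadratic term $\g''(\xi)(0,0)$ vanishing. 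Consequently $\Prj_\xi\Upsilon(t,\xi,0)=\Upsilon(t,\xi,0)$, and integrating over $[0,T]$ in the definition \eqref{mean} gives $w_{\Prj\Upsilon}(\xi)=\big(w_{\h_0}(\xi),\ast\big)$, so that $\Pr1 w_{\Prj\Upsilon}(\xi)=w_{\h_0}(\xi)$ --- precisely the identity stated before the corollary. Since $w_{\h_0}$ is defined on all of $U$ by the integral \eqref{mean}, it is a legitimate choice for $\Pr1\t{w}_{\Prj\Upsilon}$, and then the map $\Phi(x,y)=\big(\Pr1\t{w}_{\Prj\Upsilon}(x,y),\g(x,y)\big)$ of Theorem~\ref{tdue} coincides with $\F(x,y)=\big(w_{\h_0}(x,y),\g(x,y)\big)$.

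With this identification the hypothesis $\deg(\F,U\cap\Omega)\neq0$ is exactly the hypothesis $\deg(\Phi,\Oc)\neq0$ of Theorem~\ref{tdue}, with $\Oc=U\cap\Omega$ serving as the ambient open set (note $\Oc\cap M=M\cap\Omega$ since $M\subseteq U$). Theorem~\ref{tdue} then yields a connected set $\Gamma$ of nontrivial solution pairs for \eqref{due}, hence for \eqref{ord2dae2}, whose closure in $\Omega$ is not compact and meets $w_h^{-1}(0)\cap\Omega$. By Remark~\ref{remfeF} a point lies in $w_h^{-1}(0)$ exactly when $\F$ vanishes there, so the closure of $\Gamma$ meets $\F^{-1}(0)\cap\Omega$; and the final unboundedness assertion, when $\g^{-1}(0)$ is closed and $\Omega=[0,\infty)\times C_T^1(\R^m\times\R^s)$, is the corresponding special case of Theorem~\ref{tdue}.

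I expect the one genuine subtlety --- the main obstacle --- to be the bookkeeping linking the two mean-value fields: one must verify that $\Upsilon(t,\xi,0)$ is tangent to $M$ (so that $\Prj_\xi$ acts as the identity on it and its first component is literally $\h_0$), and that the everywhere-defined field $w_{\h_0}$ may indeed be used as $\Pr1\t{w}_{\Prj\Upsilon}$. Both points reduce to the short tangency computation above, after which the result is a direct transcription of Theorem~\ref{tdue} via Remark~\ref{remfeF}, entirely parallel to the deduction of Corollary~\ref{tunoB} from Theorem~\ref{tuno}.
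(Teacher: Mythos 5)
Your proposal is correct and follows essentially the same route as the paper: translate \eqref{ord2dae2} into the ODE \eqref{ord2ode2} on $M=\g^{-1}(0)$ via Proposition~\ref{propeq}, observe that $\Pr1 w_{\Prj\Upsilon}=w_{\h_0}$ so that the map $\Phi$ of Theorem~\ref{tdue} coincides with $\F$, and then invoke Theorem~\ref{tdue} together with Remark~\ref{remfeF}. Your explicit tangency computation for $\Upsilon(t,\xi,0)$ merely fills in what the paper dismisses as ``clearly,'' so it is a welcome elaboration rather than a different argument.
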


We conclude this section with a few illustrative examples.

\begin{example}\label{parabola2}
 Consider the setting of Example \ref{ex.parabolamolla}. The horizontal acceleration experienced by
$\PP$ is $-\kk\frac{(y+1)x}{x^2+1}$. Hence, the dynamics of Example \ref{ex.parabolamolla} is 
described also by the following DAE:
\[
 \left\{
     \begin{array}{l}
        \ddot x = -\kk\frac{(y+1)x}{x^2+1},\\
        \frac{1}{2}x^2-y-2=0.
     \end{array}
\right.
\]

Let us perturb this equation as follows
\[
 \left\{
     \begin{array}{l}
        \ddot x = -\kk\frac{(y+1)x}{x^2+1}+\lambda \h(t,x,y,\dot x,\dot y),\\
        \frac{1}{2}x^2-y-2=0,
     \end{array}
\right.
\]
where $\h\colon\R\times\R^2\times\R^2\to\R$ is a continuous $T$-periodic function.
Let $\Omega=[0,\infty)\times C_T^1(\R^m\times\R^s)$ and put 
\[
 \F(x,y)=\left(-\kk\frac{(y+1)x}{x^2+1},\frac{1}{2}x^2-y-2 \right),
\]
as in Example \ref{ex.parabolamolla}. Since $\deg(\F,U\cap\Omega)=1\neq 0$,
Corollary \ref{tunoB} imply that in $\Omega$
there exists an unbounded connected set $\Gamma$ of nontrivial solution pairs for the 
perturbed equation whose closure meets 
\[
\F^{-1}(0)\cap\Omega=\left\{\big(-\sqrt{2},-1\big),(0,-2),\big(\sqrt{2},-1\big)\right\}. 
\]
The result is exactly the same as in Example \ref{ex.parabolamolla}, but now we do not 
need the second component of the active force, nor we need to bother about second order 
differential equations on manifolds. Notice also that the result holds true for 
any choice of the function $\h$.
\end{example}

\begin{example}
Let $m=s=1$ and consider the following DAE (the constraint is that of Example \ref{parabola1}):
\[
 \left\{
     \begin{array}{l}
        \ddot x = \lambda\big(x+\sin(t) y\big),\\
        \frac{1}{2}x^2-y^3-y=0.
     \end{array}
\right.
\]
Let $\Omega=[0,\infty)\times C_T^1(\R^m\times\R^s)$ and put
\[
 \F(x,y)=\left( \frac{1}{2\pi}\int_0^{2\pi}\big(x+\sin(t) y\big)\dif t\,,
                                                         \,\frac{1}{2}x^2-y^3-y\right)
        =\left(x\,,\,\frac{1}{2}x^2-y^3-y\right)
\]
Since $\deg(\F,U\cap\Omega)=-1$, by Corollary \ref{tunoC}, we get that in $\Omega$ there 
exists an unbounded connected set $\Gamma$ of nontrivial solution pairs for this equation 
whose closure in $\Omega$ meets $\{(0,0)\}$ (regarded as a a solution pair).
\end{example}

\begin{example}
Let $m=1$, $k=3$ and $U=\R^3$, and consider the following DAE (the constraint is that of 
Example \ref{mostro}):
 \[
 \left\{
     \begin{array}{l}
        \ddot x = x-2y+\lambda \h(t,x,y,z,\dot x,\dot y,\dot z),\\
        z^3+z-x=0,\\
        z-y+x^2=0,
     \end{array}
\right.
\]
where $\h\colon\R\times\R^3\times\R^3\to\R$ is any continuous $T$-periodic perturbing 
function. Let $\Omega=[0,\infty)\times C_T^1(\R^1\times\R^2)$ and put
\[
 \F(x,y)=\left(x-2y,z^3+z-x, z-y+x^2\right).
\]
As it is readily checked $\F^{-1}(0,0,0)=\{(0,0,0)\}$ and $\deg(\F,U\cap\Omega)=-1$. 
Hence, for any `perturbing' function $\h$, Corollary \ref{tunoC} yields an unbounded 
connected set $\Gamma$ in $\Omega$ of nontrivial solution pairs for this equation whose 
closure in $\Omega$ meets the point $\{(0,0,0)\}$ (regarded as a solution pair). Notice 
also, as in Example \ref{parabola2}, that the above statement holds true for any choice 
of the function $\h$.
\end{example}

\end{document}